\theoremstyle{plain} 
\newtheorem{theorem}{Theorem}[section]
\newtheorem{lemma}[theorem]{Lemma}
\newtheorem{corollary}[theorem]{Corollary}
\newtheorem{proposition}[theorem]{Proposition}
\theoremstyle{definition} 
\newtheorem{definition}[theorem]{Definition}
\newcommand{\R}{\mathbb{R}}
\newcommand{\re}{\operatorname{Re}}
\newcommand{\im}{\operatorname{Im}}
\begin{document}
\begin{titlepage}

\begin{title}
{Two New Embedded Triply Periodic Minimal Surfaces of Genus 4}
\end{title}

\author
{Daniel Freese}
\address{Daniel Freese\\Department of Mathematics\\Indiana University\\
Bloomington, IN 47405
\\USA}
\author{
Matthias Weber
}
\address{Matthias Weber\\Department of Mathematics\\Indiana University\\
Bloomington, IN 47405
\\USA}
\author
{A. Thomas Yerger}
\address{Alfred Thomas Yerger\\Department of Mathematics\\Indiana University\\
Bloomington, IN 47405
\\USA}
\author
{Ramazan Yol}
\address{Ramazan Yol\\Department of Mathematics\\Indiana University\\
Bloomington, IN 47405
\\USA}

\date{\today}
\maketitle

\begin{abstract}
We add two new 1-parameter families to the short list of  known embedded triply periodic minimal surfaces of genus 4 in $\R^3$.
Both surfaces can be tiled by minimal pentagons with two straight segments and three planar symmetry curves as boundary.
In one case (which has the appearance of the CLP surface of Schwarz with an added handle) the two straight segments are  parallel, while they are orthogonal in the second case. The second family has as one limit the Costa surface, showing that this limit can occur for triply periodic minimal surfaces. For the existence proof we solve the 1-dimensional period problem through a combination of an asymptotic analysis of the period integrals and geometric methods.
\end{abstract}

\end{titlepage}
\section{Introduction}

We construct two new, closely related 1-parameter families of embedded triply periodic minimal surface of genus 4 in Euclidean space. These surfaces are interesting for several reasons:

First, by a result of Meeks \cite{me6}, a triply periodic minimal surface of genus 4 cannot be hyperelliptic, limiting the known construction methods for these surfaces. In fact, the available list of examples is rather small: They consist of Alan Schoen's H'-T, I-WP, and S'-S'' surfaces \cite{sch1, ka5},
as well as several numerically constructed examples that  to the authors'  knowledge have never  been described in detail.

One particularly effective construction method that is still available is due to Traizet \cite{tr3a, tr7}: He is able to construct  triply periodic minimal surfaces of any genus $g>2$ that resemble horizontal planes joined by catenoidal necks.

Our surfaces, however, have more complicated limits. Indeed, one of the families
limits on one side in the Costa surface so that one could call it a {\em triply periodic Costa surface}. There exist other examples (of higher genus) with the appearance  of a triply periodic Costa surface (see Batista's surface \cite{Bat1}  and Alan Schoen's I6 surface, called {\em Figure 8 annulus} in \cite{ka5}), but these examples do not truly limit in the Costa surface but rather in the singly periodic Callahan-Hoffman-Meeks surface (\cite{chm2}). This is significant if one wants to extend Traizet's regeneration construction to
employ more general necks than the catenoidal ones: Our example suggests it should be possible to use Costa necks joining three consecutive planes. A Callahan-Hoffman-Meeks limit would require an entirely different gluing procedure, involving cutting off a Callahan-Hoffman-Meeks surface by a cylinder, glued to the complement of a solid vertical cylinder in a family of horizontal planes at finite distance from each other.

Finally, our surfaces are examples of the only two possible types of genus 4 triply periodic minimal surfaces that have the vertical coordinate planes as symmetry planes and the line $y=x$ in the plane $z=0$ part on the surface.

\def\fw{2.5in}
\begin{figure}[H]
 \begin{center}
   \subfigure[CS-4]{\includegraphics[width=\fw]{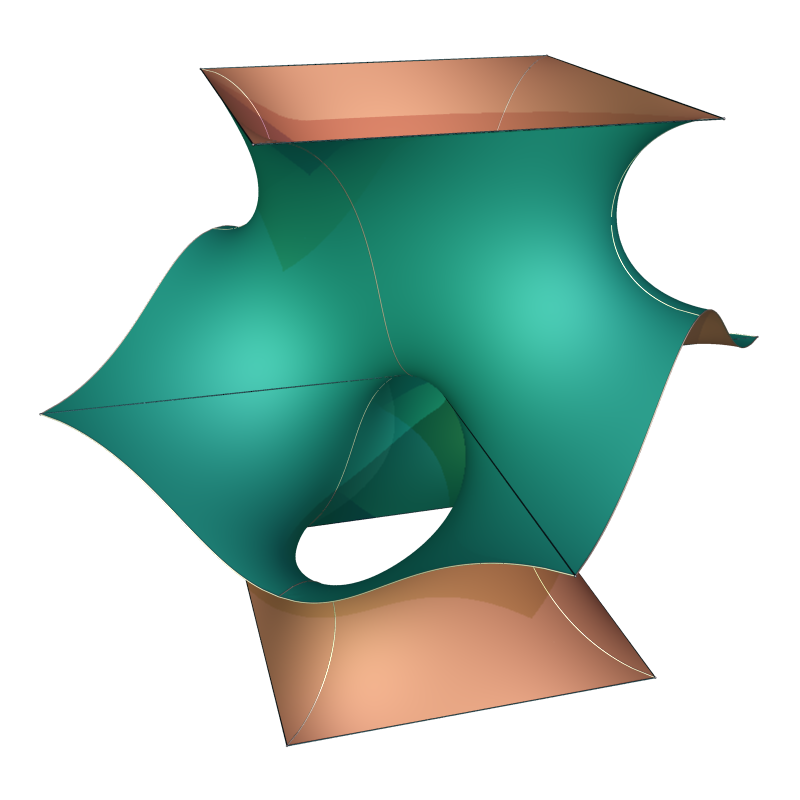}}
   \subfigure[SS-4]{\includegraphics[width=\fw]{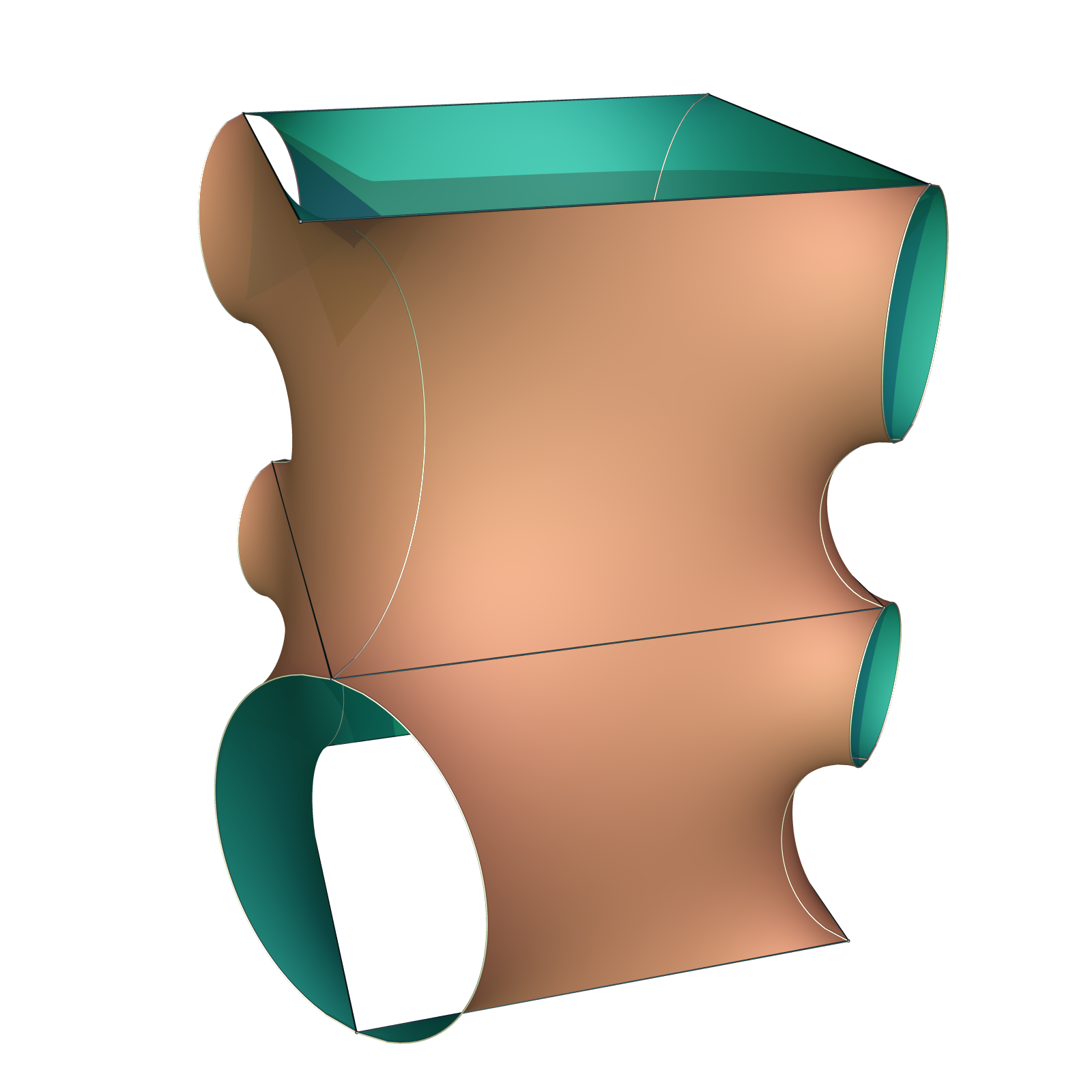}}
 \end{center}
 \caption{Translational Fundamental Pieces}
 \label{fig:fundamental}
\end{figure}

To state  our main results, we introduce some notation.
Let $\Pi$ be a minimal pentagon in a box $[0,1]\times [0,1]\times [0,h]$ where two edges are diagonals of the top and bottom faces of the box, respectively, and the remaining three edges lie in the vertical faces of the box. Moreover, all vertices of the pentagon lie on box edges or are box vertices. We further assume that the normal vector along the pentagon edges that lies in the vertical faces of the box also lies in the plane of these faces, making them symmetry planes.

\begin{definition}
We say that any minimal pentagon $\Pi$ satisfying these conditions  is of type SS if the two horizontal segments are parallel, and of type CS if they are orthogonal.
\end{definition}

\begin{theorem} \label{thm:penta}
There exist two 1-parameter families of minimal pentagons of type SS and CS, respectively.
\end{theorem}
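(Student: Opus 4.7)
The plan is to construct both families via the Weierstrass representation applied to the conformal model of the pentagon. First I would identify the underlying Riemann surface: doubling the pentagon across the three planar symmetry arcs and across the two straight segments (the latter by $180^\circ$ rotation) produces a closed Riemann surface whose symmetry group contains the Schwarz reflections. The quotient by this group identifies the pentagon with a simply connected conformal disk, and after accounting for all involutions one obtains a sphere with finitely many marked points corresponding to the pentagon vertices. The Gauss map $G$ is determined on this sphere by its prescribed values at these vertices (horizontal or vertical normals, dictated by whether a vertex lies on the intersection of two symmetry planes, on a straight segment, or at the meeting of the two), and the height differential $dh$ is determined by the requirement that it be real on symmetry arcs and imaginary on the straight segments, with appropriate zeros/poles at the vertices. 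The only free parameters are the moduli of the marked sphere, which in each case should reduce to a single real parameter $t$ after fixing the normalization and imposing the SS or CS symmetry.

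Next I would set up the period problem. The horizontal translational periods in the $x$- and $y$-directions are forced by the reflection symmetries (the surface sits in a box whose width is set by Schwarz reflection), so the only nontrivial condition is a single real equation expressing that the two straight segments are correctly positioned relative to each other: concretely, that the height difference between the two horizontal diagonals, computed as an integral of $dh$ along a path joining them, equals the prescribed height $h$ of the box, after $h$ has been used to normalize one coordinate. This leaves one period integral $\mathcal{P}(t)$ that must vanish for a particular value of the conformal parameter $t$.

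The hard part will be solving $\mathcal{P}(t)=0$. My strategy is the intermediate value theorem: identify two degenerations of $t$, one at each end of the admissible parameter interval, where the Riemann surface acquires a node (or a pair of vertices coalesce). At each degeneration I would perform an asymptotic expansion of $\mathcal{P}(t)$ using local coordinates near the node, showing that $\mathcal{P}$ blows up or tends to a definite limit with opposite signs at the two ends. For the CS family, one endpoint should be the Costa limit, where the catenoidal end formation gives a controllable logarithmic divergence; for the SS family, the limits should be the CLP surface with a pinching handle and a planar/catenoidal collapse at the other end. Continuity of $\mathcal{P}(t)$ on the interior, combined with the sign change, then yields at least one solution $t_\ast$. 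Finally, I would supplement the asymptotic estimates with a geometric monotonicity argument (using, for instance, the conjugate surface or a balance-type identity) to argue that solutions persist in a neighborhood, giving an entire 1-parameter family rather than an isolated point.
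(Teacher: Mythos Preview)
Your outline has the right spirit---Weierstrass data on the upper half plane, a period condition, and an intermediate value argument---but two structural errors would make the proof fail as written.

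First, the parameter count is off. A pentagon has five vertices, hence five marked points on the boundary of the disk; after M\"obius normalization two real moduli remain, not one. The SS/CS dichotomy is encoded in the \emph{angle data} of the Schwarz--Christoffel integrands (equivalently, in the divisors of $G\,dh$ and $\frac{1}{G}\,dh$), not as an extra constraint on the cross-ratio. In the paper the parameters are $0<a<1<b$ for SS and $a<0<b<1$ for CS. With two parameters and one real period equation you obtain a one-parameter family of solutions directly; you do not need the ``persistence via monotonicity'' patch you propose at the end, which is there only because you started with too few moduli.

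Second, you have misidentified the period condition. The height $h$ of the box is \emph{not} prescribed: it is the output parameter of the family, determined a posteriori by $\int dh$. What must be enforced is that the pentagon actually sits over a \emph{square} base---i.e.\ that the appropriate vertices have matching $x$- or $y$-coordinates so that they lie on the vertical edges of the box. This is a condition on the real parts of $\int\omega_1$ and $\int\omega_2$, hence on ratios of $G\,dh$ and $\frac{1}{G}\,dh$ integrals, not on $\int dh$. Concretely, the L\'opez--Ros factor $\rho$ kills one of the two horizontal conditions, and the remaining one becomes an equality of ratios of Schwarz--Christoffel edge lengths (e.g.\ $\frac{\int_a^1|\varphi_1|}{\int_0^a|\varphi_1|}=\frac{\int_a^1|\varphi_2|}{\int_0^a|\varphi_2|}$ in the SS case). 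The intermediate value argument is then run by fixing one modulus and sending the other to the endpoints of its interval; the Costa limit plays no role in the existence proof for CS (it appears only later, as a limit of the already-constructed family), and for CS the delicate endpoint $b\to 0$ requires a geometric comparison of the flat-structure polygons because both ratios are $\approx 1$ to leading order.
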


In order to extend the minimal pentagon $\Pi$, we first rotate it about its diagonal in the top face, then extend  by reflection at the front and right side of the box to obtain 8 copies of the pentagon that constitute a translational fundamental piece of a triply periodic minimal surface $\tilde X$. We denote the quotient of $\tilde X$ by the translational symmetries by $X$. This is  a genus 4 Riemann surface. 

\begin{corollary} 
The two families of pentagons from Theorem \ref{thm:penta} extend to embedded triply periodic minimal surfaces of genus 4. These surfaces have orthogonal vertical symmetry planes over a square grid and horizontal straight diagonals.
\end{corollary}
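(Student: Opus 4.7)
The plan is to construct $\tilde X$ by iterated Schwarz reflection of $\Pi$, verify triple periodicity and the listed symmetries directly from the reflection group, and treat embeddedness as the analytic bottleneck.

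By Schwarz's reflection principle for minimal surfaces, $\Pi$ extends smoothly across each of its five boundary arcs: across the three planar arcs by Euclidean reflection in the corresponding vertical coordinate plane (which $\Pi$ meets orthogonally by the SS/CS condition), and across the two straight horizontal edges by $180^\circ$-rotation about the line. Carrying out the specific sequence in the paragraph preceding the corollary (rotate about the top diagonal, then reflect across the front and right vertical faces of the box) produces the assembly $P$ of $8$ pentagons. The group $\Gamma$ generated by the boundary reflections and rotations of $P$ is discrete, and its translation subgroup contains a rank-$3$ lattice $\Lambda$: each pair of parallel vertical mirrors gives a translation by $2$ in the $x$- or $y$-direction, and the two straight-line rotations compose to a vertical translation by $2h$ (directly in the SS case, and after invoking the internal $180^\circ$-rotational symmetry of the CS pentagon about the axis $x=y=\tfrac12$ in the CS case). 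The $\Gamma$-orbit $\tilde X$ of $\Pi$ is therefore a smooth triply periodic minimal surface, with the $\Gamma$-translates of the three coordinate planes forming the claimed square grid of vertical symmetry planes and the $\Gamma$-translates of the top and bottom diagonals yielding the horizontal straight lines on $\tilde X$.

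To compute the genus of $X = \tilde X / \Lambda$, I would count cells in the tiling of $X$ by translational copies of $\Pi$. Every boundary arc of a tile is a geodesic of the surface (straight lines are geodesic in $\R^3$, and planar symmetry curves are geodesic on $X$ because $X$ meets the reflecting plane at a right angle), so $\chi(X)$ is determined purely combinatorially. With $F = 8$ faces, $E = 20$ edges, and $V = 6$ vertices (the latter coming from the identifications of box corners and box-edge midpoints under the $\Gamma$-action), one obtains $\chi(X) = -6$, hence $g = 4$, in agreement with the count anticipated in the paragraph preceding the statement.

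The main obstacle is embeddedness of $\tilde X$. It suffices to show that $\Pi$ is contained in the closed box $[0,1]^2 \times [0,h]$ and meets its boundary exactly along the five prescribed arcs; then the $8$ pentagons forming $P$ lie in pairwise disjoint sub-boxes whose union is a fundamental domain for $\Lambda$, and embeddedness propagates by $\Gamma$-equivariance. This box containment does not follow formally from Theorem~\ref{thm:penta}; I expect to establish it from the Weierstrass representation produced in that proof, where each coordinate function is a period integral whose sign along the boundary arcs is controlled by the prescribed range of the Gauss map. Combined with the maximum principle for minimal surfaces, this should confine $\Pi$ to its box and rule out any interior self-contact between adjacent reflected pentagons.
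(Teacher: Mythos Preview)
Your reflection-group and Euler-characteristic arguments are fine, and the vertex count $V=6$ matches the paper's (it reappears in Section~\ref{sec:sym}). The problem is the embeddedness step. Your reduction hinges on the sentence ``then the $8$ pentagons forming $P$ lie in pairwise disjoint sub-boxes,'' and this is simply false for the construction you are following: the very first operation is the $180^\circ$ rotation about the top diagonal, and that rotation preserves the box $[0,1]^2\times[0,h]$. So $\Pi$ and its rotated copy $\Pi'$ sit in the \emph{same} box, and showing $\Pi\subset[0,1]^2\times[0,h]$ does nothing to prevent $\Pi\cap\Pi'\ne\emptyset$ in the interior. In the CS case there is the further wrinkle that the two straight edges are orthogonal, so the composition of the two line rotations is a screw motion, not a translation; your appeal to an ``internal $180^\circ$-rotational symmetry of the CS pentagon about the axis $x=y=\tfrac12$'' to fix this is not something you have established, and in fact the pentagon does not have that symmetry. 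Finally, box containment via the maximum principle would not by itself show that $\Pi$ is embedded: nothing in your outline rules out self-intersections of $\Pi$ inside the box.

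The paper's argument (Section~\ref{sec:embed}) is sharper and rests on a fact you have not used: the Gauss map of the genus-$4$ quotient has degree $3$, and all six points with vertical normal already occur at pentagon vertices. This forces the boundary of $\Pi$ to be a graph over a simple curve in a coordinate plane (no interior vertical normals means no folding), and then one shows---using the explicit formula for $G^2$---that there are no interior points with horizontal normal either, so the projection to that plane is an immersion and $\Pi$ itself is a graph over a simply connected domain $\Omega$. That graph property is what gives both the embeddedness of $\Pi$ and enough control to separate it from its rotated neighbor; box containment alone is too coarse.
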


Examples of the surfaces obtained this way can be seen in Figure \ref{fig:fundamental}.  


\def\fw{3.in}
\begin{figure}[H]
 \begin{center}
   \subfigure[near Costa surface]{\includegraphics[width=\fw]{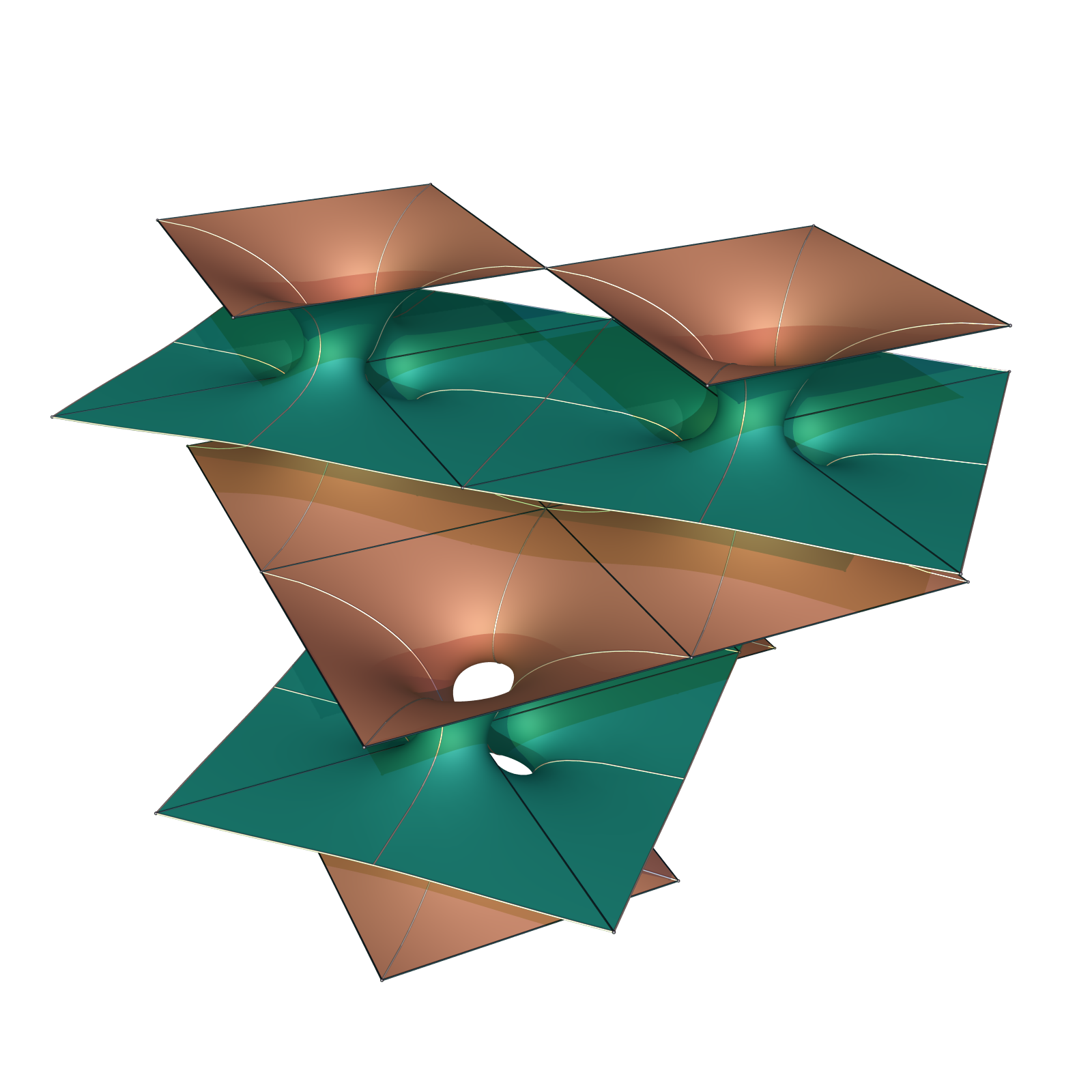}}
   \subfigure[near doubly Scherk surfaces]{\includegraphics[width=\fw]{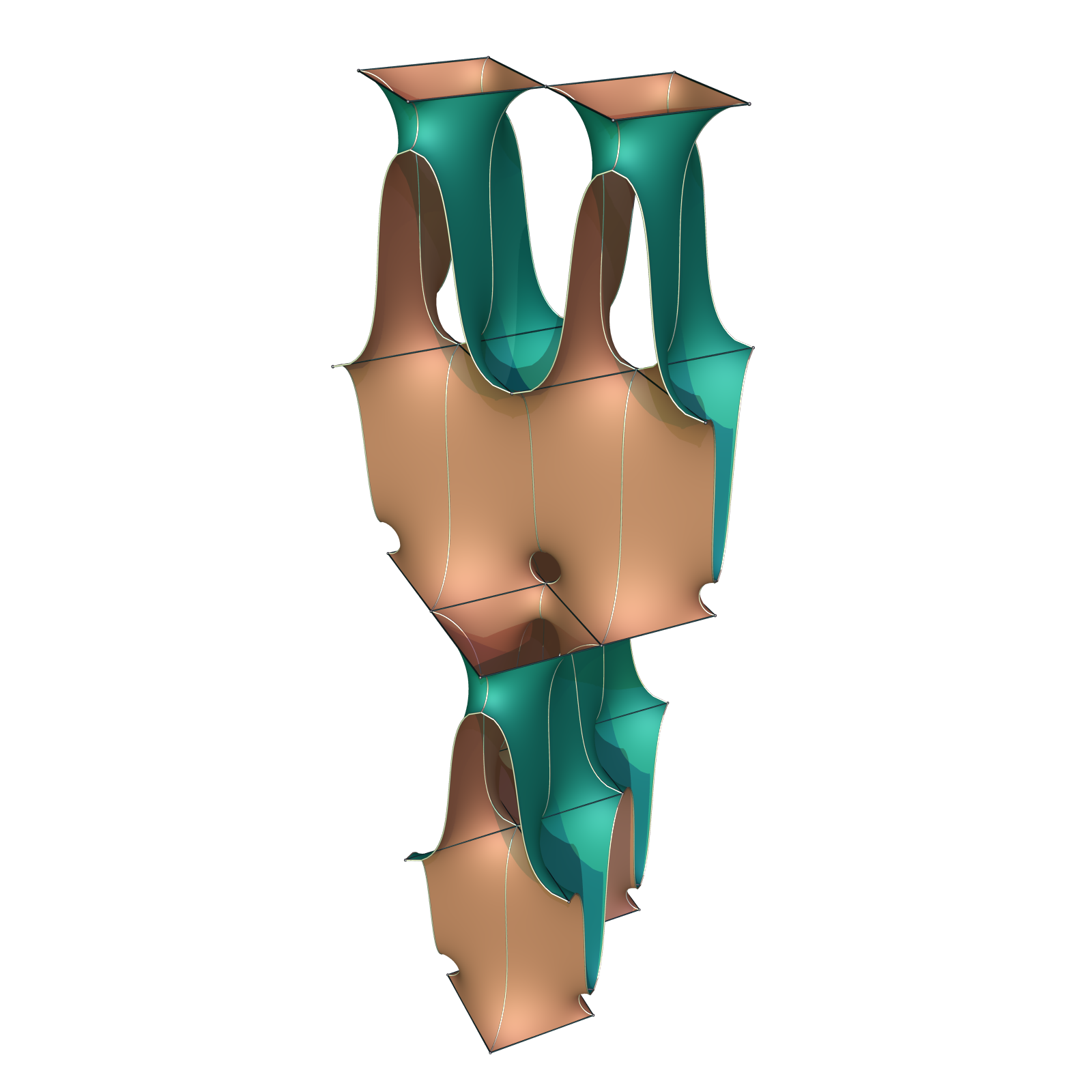}}
 \end{center}
 \caption{Limits of the CS-4 surfaces}
 \label{fig:limcs4}
\end{figure}

Regarding the limits of our surfaces, we will prove:
\begin{theorem} \label{thm:limitcs}
There is a sequence of  CS-4 surfaces converging to the Costa surface.
\end{theorem}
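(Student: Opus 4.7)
My plan is to identify an extreme value of the CS-4 parameter at which, after appropriate rescaling, the surfaces converge smoothly to the Costa surface, and then verify this convergence at the level of the underlying Weierstrass data.

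First, I would isolate the relevant degeneration. The CS-4 family is parameterized by (effectively) the height $h$ of the box, or equivalently by a modulus of the underlying genus 4 Riemann surface $X$. A Costa limit should correspond to a partial pinching of $X$ along three independent homology classes, yielding a nodal surface whose normalization is a square torus with three pairs of marked points. These three pinching cycles correspond geometrically to the three ``neck regions'' that will open up into the planar middle end and the two catenoidal ends of the Costa surface. Identifying this degeneration means locating, in the 1-parameter CS-4 family, the endpoint at which the period integrals around these vanishing cycles tend to zero.

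Second, because every CS-4 surface has infinite total curvature while the Costa surface has finite total curvature and is not periodic, the limit must be taken with rescaling: one dilates so that the translation lattice expands to infinity and the neighboring copies of a fundamental piece recede. One then expects a single fundamental piece, together with its symmetric extensions, to converge in the smooth topology on compact sets to the Costa surface. The symmetries line up precisely: the two vertical coordinate symmetry planes and the horizontal straight diagonal of CS-4 descend to the reflective symmetries of Costa, and the diagonal line in $\{z=0\}$ becomes one of the two straight lines on the Costa surface. Having matched symmetries, I would identify the limiting Weierstrass data: pulling back $(G, dh)$ from $X$ to the normalization of the nodal limit, rescaling, and comparing orders of zero/pole and residues at the three marked points with Costa's explicit data on the square torus minus three points.

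The main obstacle, and the real content of the theorem, is the fourth step: proving that the CS-4 family actually reaches this boundary degeneration. This amounts to an asymptotic analysis of the CS-4 period integral as the three cycles pinch, showing that the 1-dimensional period condition, solved in the interior of parameter space by Theorem \ref{thm:penta}, is also satisfied by the Costa data at the boundary. Since the existence proof of Theorem \ref{thm:penta} already relies on an asymptotic analysis of the period integrals combined with geometric arguments, my strategy would be to reuse that machinery at the Costa end of the parameter interval, tracking the leading-order behavior of the period as a function of the pinching parameters and verifying that the horizontal period closes up in the limit with residues matching those of the Costa surface after the rescaling.
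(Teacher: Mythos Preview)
Your outline has the right shape, but it conflates two distinct issues and thereby misses the actual content of the proof.

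The paper parametrizes CS-4 concretely by Schwarz--Christoffel data $a<0<b<1$ on the upper half plane, and the Costa degeneration is the corner $(a,b)\to(-\infty,0)$: the marked points $\{a,\infty\}$ and $\{0,b\}$ each coalesce, the integrands limit explicitly to $\psi_1=z^{-3/2}(z-1)^{-3/4}\,dz$ and $\psi_2=z^{-1/2}(z-1)^{-1/4}\,dz$ (up to scale), and these are identified with Costa's flat structures directly. Your abstract nodal-curve/pinching picture is not wrong in spirit, but without translating it into these coordinates you cannot actually compute the limit data or the L\'opez--Ros factor.

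More seriously, verifying that the period condition holds \emph{at} the Costa boundary is not sufficient: you must show that solutions $(a,b)$ of the period problem in the open parameter domain actually accumulate at $(-\infty,0)$. The existence argument (Theorem~\ref{thm:cs4exist}) gives, for each fixed $a$, some $b_a\in(0,1)$, but says nothing preventing $b_a$ from staying bounded away from $0$ as $a\to-\infty$. The paper supplies the missing ingredient as Lemma~\ref{lem:cslim}: for any fixed $b$ and any $N$ there is $a<-N$ with $\sigma_1>\sigma_2$. Proving this requires a fresh round of asymptotic estimates for all the edge-length integrals (including the auxiliary $D_1$ and $E_2$) in the regime $a\to-\infty$ with $b$ fixed---orthogonal to the $b\to 0,1$ regime used in the existence proof---together with the flat-structure identities. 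Sandwiching this against Lemma~\ref{lem:bclose0} via the intermediate value theorem then produces solutions with $a$ arbitrarily negative and $b$ arbitrarily small. Your plan to ``reuse that machinery at the Costa end'' understates what is needed: the estimates are genuinely new, not recycled.
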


Numerical evidence suggests that  other limits of CS-4 surfaces include  the doubly periodic Scherk and  Karcher-Scherk surfaces.

Moreover,
one limit of the SS-4 surface appears to consist of a family of vertical planes over a square grid desingularized by two different singly periodic Scherk surfaces, one having twice the translational period of the other. 
Such desingularizations have been constructed by \cite{tr1,tr3a}.
At the other end of the parameter range, two interesting limits appear to be  possible, namely the doubly periodic Scherk surface (with orthogonal ends), or the Karcher-Scherk surface of genus 1 \cite{howe3}.

\def\fw{3.in}
\begin{figure}[H]
 \begin{center}
   \subfigure[near singly Scherk surfaces]{\includegraphics[width=\fw]{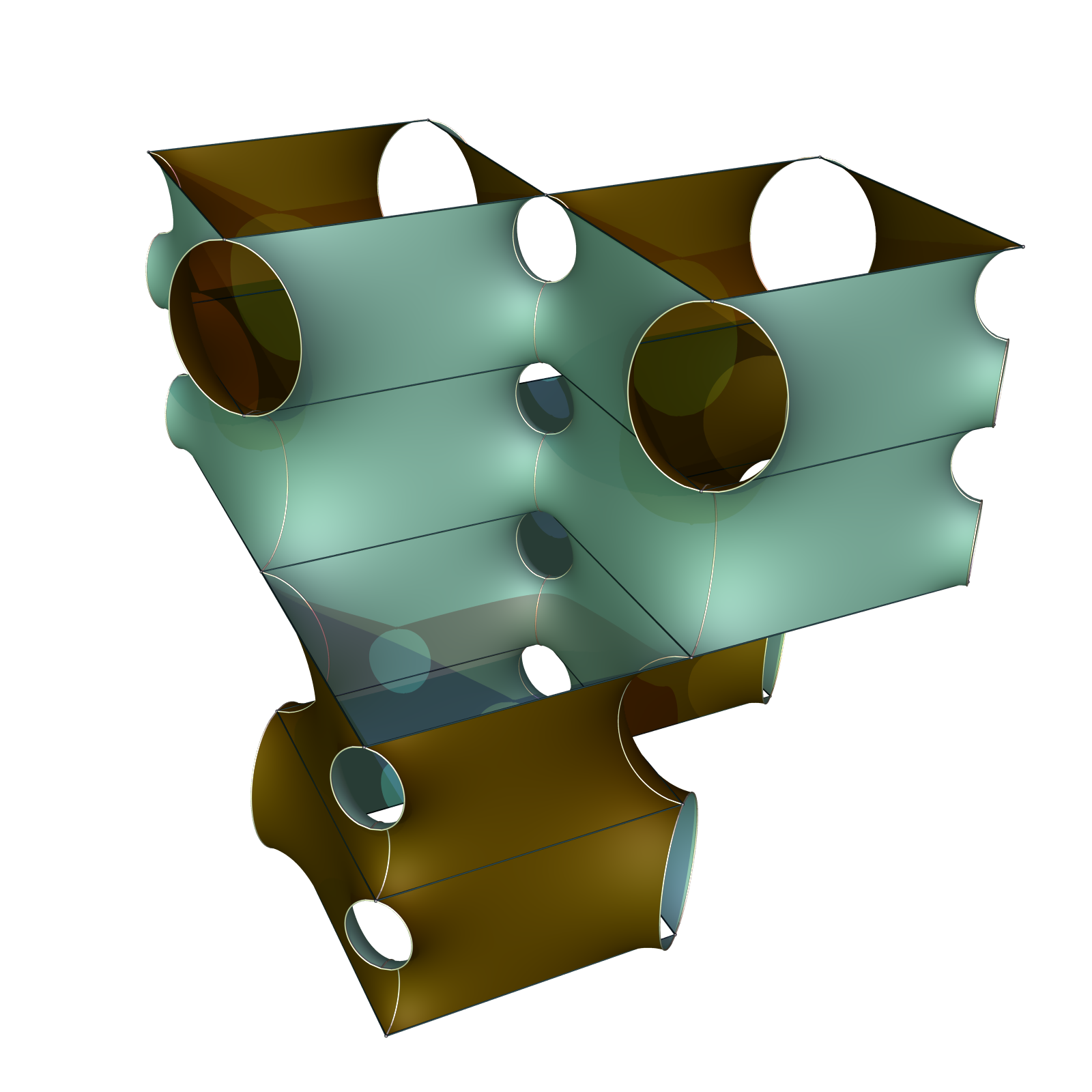}}
   \subfigure[near doubly Scherk surfaces]{\includegraphics[width=\fw]{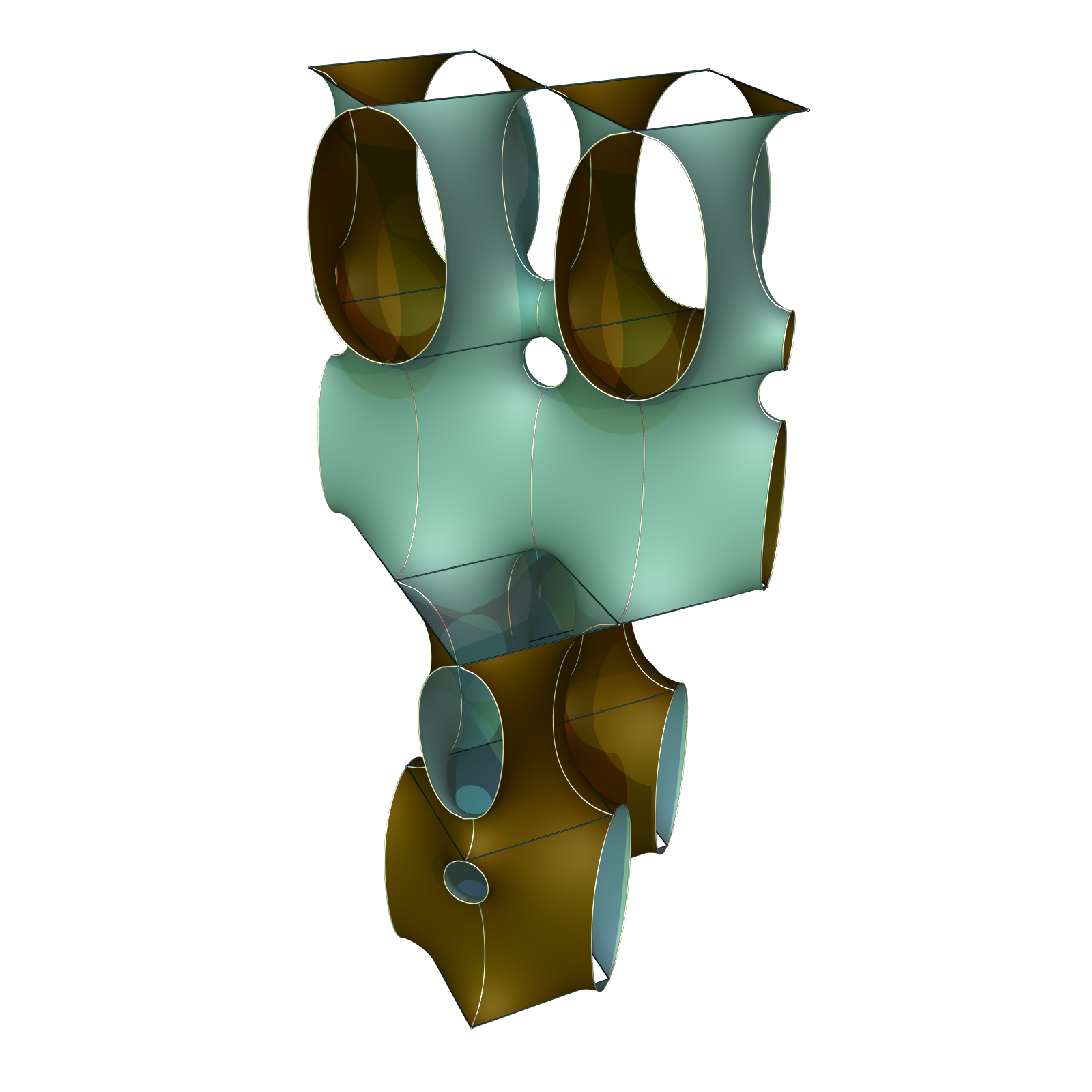}}
 \end{center}
 \caption{Limits of the SS-4 surfaces}
 \label{fig:limss4}
\end{figure}

The paper is organized as follows: In Section \ref{sec:geom}, we show that the assumed symmetries of our surfaces have strong implications on the flat structures of the Weierstrass 1-forms, allowing us to parametrize the surfaces via Schwarz-Christoffel maps.
Section \ref{sec:ss4} is devoted to the existence proof of the SS-4 family, and Section \ref{sec:cs4}  deals the CS-4 family.
In Section \ref{sec:limits} we prove that the SS-4 surfaces limit in the Costa surface.
Embeddeness is proven in Section \ref{sec:embed}. Finally,  in Section \ref{sec:sym}, we characterize the CS-4 and SS-4 surface families by their symmetries.

\section{Geometry of the Weierstrass Representation}\label{sec:geom}

Let a minimal map (i.e. a conformal parametrization of a minimal surface) be given by

\[
f(z) = \re \int^z (\omega_1, \omega_2, \omega_3)
\]
where

$$ \omega_1 = {}  \frac12 (\frac1G-G)\, dh, \qquad \omega_2 = {}  \frac{i}2 (\frac1G+G)\, dh, \qquad \omega_3= {} dh \ .$$

Here, the meromorphic function $G$ is the stereographic projection of the Gauss map, and the holomorphic 1-form $dh$ is called the height differential.

Recall that multiplying $dh$ by a real factor scales the surface, and multiplying it by $e^{i t}$ is the Bonnet deformation. 
Multiplying $G$ by a real factor is called the L{\'o}pez-Ros deformation, while multiplying $G$ by $e^{i t}$ rotates the surface about a vertical axis by the angle $\varphi$.


Let $f:U \to \R^3$ be a minimal map, given by Weierstrass data $G$ and $dh$.
Introduce  $\Omega_k(z) =\int^z \omega_k$,  $\Phi_1(z) =\int^z G\, dh$, and $\Phi_2(z) =\int^z \frac1G\, dh$.

We will next explain that the particular symmetries we assume about our surfaces imply that the flat structures of $dh$ , $G\, dh$ and $\frac1G\, dh$ are Euclidean pentagons. This is crucial for our line of reasoning, because it will allow us to define  $dh$ , $G\, dh$ and $\frac1G\, dh$ as integrands of Schwarz-Christoffel maps from the upper half plane to such Euclidean pentagons.

\begin{proposition}
\label{prop:symmetries}
Suppose that $\Omega_3$, $\Phi_1$, and $\Phi_2$ extend continuously to  the real interval $(a,b)\subset \partial U$ and map it to a segment orthogonal to, to a segment making angle $\alpha$ with, and to a segment making angle $-\alpha$ with the real axis, respectively. Then the Schwarz reflection principle guarantees that $\Omega_3$, $\Phi_1$, and $\Phi_2$ and thus $f$ can be extended across $(a,b)$ by reflection. We claim that this extension of $f$ is realized by a $180^\circ$ rotation  about  $f(a,b)$, which is a horizontal straight line in $\R^3$ making angle $\alpha$ with the $x$-direction.
\end{proposition}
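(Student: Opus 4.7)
The plan is to verify directly that the Schwarz-reflection extension satisfies $f(\bar z) = T(f(z))$, where $T$ is the claimed $180^\circ$ rotation. This breaks into three subtasks: converting each hypothesis into a concrete Schwarz formula for $\Omega_3$, $\Phi_1$, $\Phi_2$ separately; computing the tangent of $f$ along $(a,b)$ to identify the image line; and assembling the three one-variable reflection formulas into a single rigid motion of $\R^3$.

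For the first subtask, the hypothesis that $\Omega_3$ maps $(a,b)$ to a vertical segment says $\re\Omega_3\equiv c_3$ on $(a,b)$, so Schwarz gives $\Omega_3(\bar z)=2c_3-\overline{\Omega_3(z)}$. The hypothesis that $\Phi_1((a,b))$ lies on a line $p_1 + e^{i\alpha}\R$ says $e^{-i\alpha}(\Phi_1-p_1)$ is real on $(a,b)$, yielding $\Phi_1(\bar z) = e^{2i\alpha}\overline{\Phi_1(z)}+\gamma_1$ for some constant $\gamma_1$; analogously $\Phi_2(\bar z) = e^{-2i\alpha}\overline{\Phi_2(z)}+\gamma_2$. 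For the second subtask, write the Weierstrass data along $(a,b)$ as $dh=i\beta\,dt$, $G\,dh = e^{i\alpha}\mu_1\,dt$, $\tfrac{1}{G}\,dh = e^{-i\alpha}\mu_2\,dt$ with real $\beta,\mu_1,\mu_2$. Substituting into $\omega_1,\omega_2,\omega_3$ and taking real parts gives $f_3'\equiv 0$ and $(f_1',f_2') = \tfrac12(\mu_2 - \mu_1)(\cos\alpha,\sin\alpha)$, so $f((a,b))$ is a horizontal line at height $c_3$ with direction $(\cos\alpha,\sin\alpha,0)$.

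The third subtask is the algebraic assembly. To handle the two horizontal coordinates together I introduce $F := f_1 + i f_2$. A short calculation using $\omega_1=\tfrac12(\tfrac1G-G)\,dh$ and $\omega_2=\tfrac{i}{2}(\tfrac1G+G)\,dh$ produces the clean identity $F = \tfrac12(\overline{\Phi_2}-\Phi_1)$. Substituting the two Schwarz formulas from the first subtask into this identity gives $F(\bar z) = e^{2i\alpha}\overline{F(z)} + C$ for some $C \in \C$, which is exactly the Euclidean reflection of the horizontal plane across a line of direction $(\cos\alpha,\sin\alpha)$. Combined with $f_3(\bar z) = 2c_3 - f_3(z)$, this is the action on $\R^3$ of a $180^\circ$ rotation $T$ about some horizontal axis at height $c_3$ with direction $(\cos\alpha,\sin\alpha,0)$. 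Since the fixed set of $T$ must contain the fixed set of the extension on $(a,b)$, namely $f((a,b))$, the axis of $T$ coincides with $f((a,b))$.

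I do not expect a conceptual obstacle: the proof rests on the standard one-variable Schwarz principle together with the observation $F = \tfrac12(\overline{\Phi_2}-\Phi_1)$, which is what converts three independent line reflections into a single spatial rotation. The only place requiring genuine care is sign-bookkeeping with the phases $e^{\pm 2i\alpha}$ coming from $\Phi_1$ and $\Phi_2$, to ensure that the horizontal reflection axis comes out in direction $(\cos\alpha,\sin\alpha)$ and not some phase-rotated alternative.
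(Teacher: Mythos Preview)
Your argument is correct. The paper's proof takes a different, shorter route: it first normalizes to $\alpha=0$ by replacing $G$ with $e^{-i\alpha}G$, observing that this rotates both the flat structures $\Phi_1,\Phi_2$ and the surface itself about a vertical axis; with $\alpha=0$ the Schwarz extension simply conjugates $G\,dh$ and $\frac1G\,dh$, and one reads off directly that $\re\omega_1$ is preserved while $\re\omega_2$ and $\re\omega_3$ change sign. Your approach instead handles general $\alpha$ head-on by packaging the horizontal coordinates as $F=f_1+if_2=\tfrac12(\overline{\Phi_2}-\Phi_1)$, so that the two Schwarz formulas for $\Phi_1$ and $\Phi_2$ combine into a single planar reflection $F(\bar z)=e^{2i\alpha}\overline{F(z)}+C$. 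The paper's reduction is more economical and makes the geometric content (rotation commutes with the symmetry classification) transparent; your direct computation is more self-contained, avoids appealing to how the L\'opez--Ros phase acts on the ambient surface, and makes explicit both the identity $F=\tfrac12(\overline{\Phi_2}-\Phi_1)$ and the tangent calculation along $(a,b)$, which the paper leaves implicit.
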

\begin{proof}
To see this, we first note that we can assume that $\alpha=0$. Otherwise we multiply $G$ by $e^{-\alpha i}$: This rotates the segments $\Phi_1(a,b)$ and $\Phi_2(a,b)$ to become parallel to the real axis, and leaves $\Omega_3(a,b)$ unchanged. On the other hand, it rotates the surface about a vertical  axis by angle $-\alpha$.

Since now $\alpha=0$, extension across $(a,b)$  conjugates $G\, dh$  and $\frac1G\, dh$. Consequently,
this leaves $\re\omega_1$ unchanged, while it turns   $\re\omega_2$  and $\re\omega_3$ into $-\re\omega_2$ and $-\re\omega_3$.

Vice versa, if a minimal surface contains a horizontal straight line (necessarily a symmetry line) that is parametrized by a segment $(a,b)\subset \R$, the Weierstrass integrals above map $(a,b)$ to segments with the appropriate angles.

Similarly,    $\Omega_3$, $\Phi_1$, and $\Phi_2$ map the real interval $(a,b)\subset U$ to a  segment parallel to, a segment making angle $\alpha$ with, and a segment making angle $-\alpha$ with the real axis, respectively, if and only if $f(a,b)$ is a reflectional symmetry curve in a vertical plane making angle $\alpha$ with the $x$-direction.
\end{proof}

\section{The SS-4 Surface}\label{sec:ss4}

\begin{figure}[H]
   \centering
   \includegraphics[width=6in]{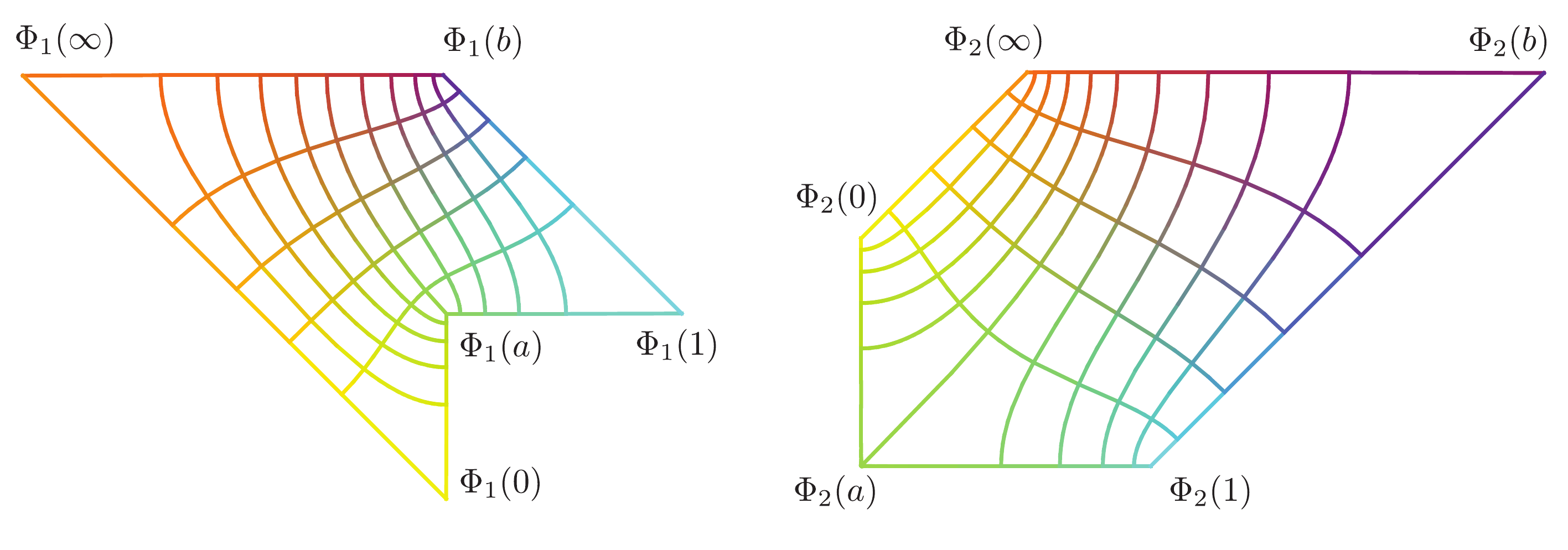}
   \caption{Schwarz-Christoffel images of Upper Half Plane for SS-4}
   \label{fig:SS-4-schwarz}
\end{figure}

We begin the  construction of the SS-4 surfaces using Schwarz-Christoffel maps to the polygons in Figure \ref{fig:SS-4-schwarz}. Consider in the upper half plane the 1-forms

\begin{align*}
&\varphi_1 = z^{-3/4}(z-a)^{1/2}(1-z)^{-3/4}(b-z)^{-1/4}dz\\
&\varphi_2 =  z^{-1/4}(z-a)^{-1/2}(1-z)^{-1/4}(b-z)^{-3/4}dz
\end{align*}

where $0< a < 1 <b$. Then $\varphi_1$ and $\varphi_2$ are positive on $(a,1)$, and extend analytically. In particular, the interval $(a,1)$ is mapped to a segment parallel to the real axis by $\Phi_1$ and $\Phi_2$, as in the figure.

These forms will become the Weierstrass 1-forms $G\, dh= \rho \varphi_1$ and $\frac1G\, dh =\frac1\rho \varphi_2$ after choosing a  suitable L{\'o}pez-Ros  factor $\rho$. Consequently, the height differential is given by
\[
dh = z^{-1/2}(1-z)^{-1/2}(b-z)^{-1/2}dz \ .
\]
Note that  this integrand is again a Schwarz-Christoffel integrand, and positive on $(a,1)$. This implies that $\Omega_3$ maps the upper half plane to a rectangle with the images of the segments $(-\infty, 0)$ and $(1,b)$ becoming segments orthogonal to the real axis. By Proposition \ref{prop:symmetries}, $f$ will  map these segments the horizontal straight lines, and all other segments to curves in symmetry planes making the required angles.

We have proven:

\begin{lemma}\label{lem:symss}
For any $\rho>0$ and $0<a<1<b$, the Weierstrass map $f$
maps the upper half plane to a minimal pentagon with three edges lying in vertical symmetry planes and two horizontal edges  which are straight line segments.  One symmetry plane has its normal parallel to the $x$-axis, and two have normals parallel to the $y$-axis. The horizontal lines make angle $-\frac{\pi}{4}$ with $x$-axis.
\end{lemma}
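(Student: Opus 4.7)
The plan is to apply Proposition \ref{prop:symmetries} on each of the five real subintervals $(-\infty,0)$, $(0,a)$, $(a,1)$, $(1,b)$, $(b,\infty)$, after tabulating the arguments of $\varphi_1$, $\varphi_2$ and $dh$ along them. Since the L{\'o}pez--Ros factor $\rho>0$ only rescales $\Phi_1$ and $\Phi_2$ without affecting the arguments of their integrands, the boundary geometry, and hence the symmetry classification, is independent of $\rho$ and of the specific choice of $0<a<1<b$.

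First I would take $(a,1)$ as the reference interval: there $\varphi_1$, $\varphi_2$, $dh$ are all positive by construction, so $\Omega_3$, $\Phi_1$, $\Phi_2$ each map it to segments parallel to $\R$. Then I would propagate arguments across the other four intervals via the Schwarz--Christoffel turning rule: at a prevertex with integrand exponent $\alpha_k-1$ the image tangent rotates by $(1-\alpha_k)\pi$ as $z$ crosses from left to right. Reading off the exponents of $\varphi_1$, $\varphi_2$, and $dh$ at the prevertices $0, a, 1, b$, and supplying the residual turn at $\infty$ required to make the total rotation equal $2\pi$, yields a finite table of angles for each of the three 1-forms on each of the five intervals. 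The identity $\varphi_1 \varphi_2 = dh^2$, which is a one-line check in exponents, guarantees that on every interval the arguments of $\Phi_1$ and $\Phi_2$ are opposite modulo $\pi$, which is exactly the hypothesis required by Proposition \ref{prop:symmetries}.

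Reading the table off interval by interval then classifies each boundary arc. On $(-\infty,0)$ and $(1,b)$, $\Omega_3$ is orthogonal to $\R$ while $\Phi_1,\Phi_2$ make angles $\mp\pi/4$, so $f$ maps these intervals to horizontal straight segments at angle $-\pi/4$ to the $x$-direction. On $(0,a)$, $(a,1)$, $(b,\infty)$, $\Omega_3$ is parallel to $\R$ and the $\Phi_k$ pair as $\pm\pi/2$, $0$, $0$ respectively, so $f$ maps these to planar symmetry curves in vertical planes with normals parallel to the $x$-, $y$-, and $y$-axis. Together these are the five edges claimed by the lemma.

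The only real care required is bookkeeping: keeping branch choices consistent across the five intervals so that the $(\alpha,-\alpha)$ pairing of Proposition \ref{prop:symmetries} genuinely holds on every interval and not just modulo $\pi$ with an arbitrary sign. The constraint $\varphi_1\varphi_2=dh^2$ and the fact that all three 1-forms are simultaneously positive on the reference interval $(a,1)$ force the correct choice of sheet throughout, so there is no deeper obstacle.
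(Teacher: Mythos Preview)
Your proposal is correct and follows essentially the same approach as the paper: read off the directions of $\Omega_3$, $\Phi_1$, $\Phi_2$ on each of the five real subintervals from the Schwarz--Christoffel exponents and then invoke Proposition~\ref{prop:symmetries} interval by interval. The paper is terser, relying on the figure to display the edge directions, whereas you compute them explicitly via the turning rule and add the identity $\varphi_1\varphi_2=dh^2$ as a consistency check for the $(\alpha,-\alpha)$ pairing; this extra bookkeeping is sound but not a different idea.
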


We will next discuss the period problem for these surfaces.

\begin{proposition}
A pentagon as constructed in Lemma \ref{lem:symss} is of type $SS$ if and only if
\[
\frac{\int_a^1|\varphi_1|}{\int_0^a|\varphi_1|}=\frac{\int_a^1|\varphi_2|}{\int_0^a|\varphi_2|} \ .
\]
\end{proposition}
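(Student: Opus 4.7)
My plan is to convert the SS condition into integral equalities for the Weierstrass forms over the five real subintervals $(-\infty,0),(0,a),(a,1),(1,b),(b,\infty)$, and then show that closure identities at infinity collapse the collection into the single ratio equality of the proposition. By Proposition~\ref{prop:symmetries} applied arc by arc, each vertex $P_k = f(k)$ for $k\in\{0,a,1,b,\infty\}$ is forced onto the intersection of its two adjacent box features (two vertical symmetry planes, or a symmetry plane and a horizontal segment). After a translation placing the three symmetry planes at $\{x=0\},\{y=0\},\{y=1\}$, the SS configuration (parallel horizontal diagonals at angle $-\pi/4$) pins the vertices to
\[
P_0=(0,1,0),\; P_a=(0,1,z_a),\; P_1=(0,1,h),\; P_b=(1,0,h),\; P_\infty=(1,0,0),
\]
for some $z_a\in(0,h)$, up to an overall scale.

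Using $\omega_1=\tfrac{1}{2\rho}\varphi_2-\tfrac{\rho}{2}\varphi_1$ and $\omega_2=\tfrac{i}{2\rho}\varphi_2+\tfrac{i\rho}{2}\varphi_1$ together with the arguments of $\varphi_1,\varphi_2$ on each arc, one then evaluates $\Delta f_x,\Delta f_y$ edge by edge. On $(0,a)$ both $\varphi_k$ are purely imaginary with opposite signs, so $\omega_1$ is purely imaginary and $\Delta f_x=0$ automatically, while the requirement $P_a-P_0=(0,0,z_a)$ reduces to $\Delta f_y=0$, equivalently $\rho^2=\int_0^a|\varphi_2|/\int_0^a|\varphi_1|$. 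On $(a,1)$ both $\varphi_k$ are real positive, $\omega_2$ is purely imaginary, and the analogous requirement $P_1-P_a=(0,0,h-z_a)$ gives $\rho^2=\int_a^1|\varphi_2|/\int_a^1|\varphi_1|$. Matching these two forced values of $\rho^2$ is precisely the ratio condition of the proposition, which gives the ``only if'' direction.

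For the converse, take $\rho^2$ to be the common value furnished by the ratio condition. It still remains to verify two further constraints, attached to the other three arcs: $\Delta f_x=0$ on $(b,\infty)$, so that $P_b$ and $P_\infty$ share the $x$-coordinate $1$; and equality of the lengths of the two horizontal diagonals, so the pentagon truly inscribes into a common box. Both reduce to the closure identities $\int_{-\infty}^\infty\varphi_k=0$ for $k=1,2$, valid because $\varphi_1=O(|z|^{-5/4})$ and $\varphi_2=O(|z|^{-7/4})$ at infinity, so closing the contour over a large upper semicircle contributes nothing. Splitting into real and imaginary parts yields the arc-length relations $\int_b^\infty|\varphi_k|=\int_0^a|\varphi_k|+\int_a^1|\varphi_k|$ and $\int_1^b|\varphi_k|-\int_{-\infty}^0|\varphi_k|=\pm\sqrt 2\int_0^a|\varphi_k|$; substituting into the two remaining constraints one sees that each reduces algebraically to the ratio condition already in hand.

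The main obstacle is not any single calculation but the recognition in this last step: a naive count gives several a priori independent constraints for the pentagon to be SS, and it is the rapid decay of $\varphi_1,\varphi_2$ at infinity --- encoded in the two closure identities --- that makes all of them collapse to the single ratio equality.
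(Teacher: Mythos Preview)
Your argument is correct and its core — reading off $\rho^2=\int_0^a|\varphi_2|/\int_0^a|\varphi_1|$ from the $(0,a)$ arc and $\rho^2=\int_a^1|\varphi_2|/\int_a^1|\varphi_1|$ from the $(a,1)$ arc, then matching — is exactly what the paper does. The paper phrases it as: $\re\int_0^a\omega_1=0$ and $\re\int_a^1\omega_2=0$ hold automatically by symmetry, so $\re\int_0^1(\omega_1,\omega_2)=0$ reduces to $\re\int_0^a\omega_2=0$ and $\re\int_a^1\omega_1=0$, and compatibility of the two resulting expressions for $\rho$ is the ratio identity.

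Where you diverge is in closing the converse. You treat ``$f(b)$ and $f(\infty)$ share an $x$--coordinate'' and ``the two diagonals have equal length'' as genuinely separate constraints and discharge them with the analytic closure identities $\int_{-\infty}^\infty\varphi_k=0$ coming from the $|z|^{-5/4}$, $|z|^{-7/4}$ decay. This is valid, but heavier than needed. The paper simply asserts that once $f(0)$ and $f(1)$ lie on the same vertical line the rest follows, and the reason is geometric: by Lemma~\ref{lem:symss} the arc $(b,\infty)$ already lies in a plane $y=\text{const}$, so $f(b)$ and $f(\infty)$ share their $y$--coordinate for free; since the two horizontal edges are \emph{parallel} (this is the SS case) and their inner endpoints $f(1),f(0)$ share $(x,y)$, equality of the $y$--drops along the diagonals forces equal diagonal lengths, hence equal $x$--gains, hence $f(b)_x=f(\infty)_x$. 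Your closure identities recover the same fact analytically; the paper's route just exploits the parallelism of the diagonals directly. Either way the two ``extra'' constraints are not independent — they are consequences of the symmetry structure already built into Lemma~\ref{lem:symss}, which is the real reason everything collapses to a single ratio.
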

\begin{proof}
In order for the pentagon to lie in a box over a square as claimed in Theorem \ref{thm:penta} we need the vertices $f(0)$, $f(a)$, $f(1)$ to be on the same vertical line as in Figure   \ref{fig:FundamentalPentagons} (a). For, this, it suffices to have $\re \int_0^1 (\omega_1, \omega_2)=0$,  because it forces $f(b)$ and $f(\infty)$ to be on the same vertical line. We also  observe that automatically $\re \int_0^a \omega_1=0$ since $f(0)$ and $f(a)$ are on the same symmetry plane with $x$ coordinate  fixed.  Similarly $\re \int_a^1 \omega_2=0$. So $\re \int_0^1 (\omega_1, \omega_2 )=0$ if and only if $\re \int_0^a \omega_2=0$ and $\re \int_a^1 \omega_1=0$ i.e. $\im \rho \int_0^a \varphi_1=-\im \frac1\rho \int_0^a \varphi_2$ and $\re \rho \int_a^1 \varphi_1=\re\frac1\rho \int_a^1 \varphi_2$. 
Now it is clear that by choosing a positive L{\'o}pez-Ros factor we can force either one of last two  equalities and we know that  $\int_0^a \varphi_1$ is positive imaginary and $\int_a^1 \varphi_1$ is positive real.  Thus  our condition is equivalent to the claimed equality.
\end{proof}

\def\fw{2.8in}
\begin{figure}[H]
 \begin{center}
   \subfigure[SS-4 Pentagon]{\includegraphics[width=\fw]{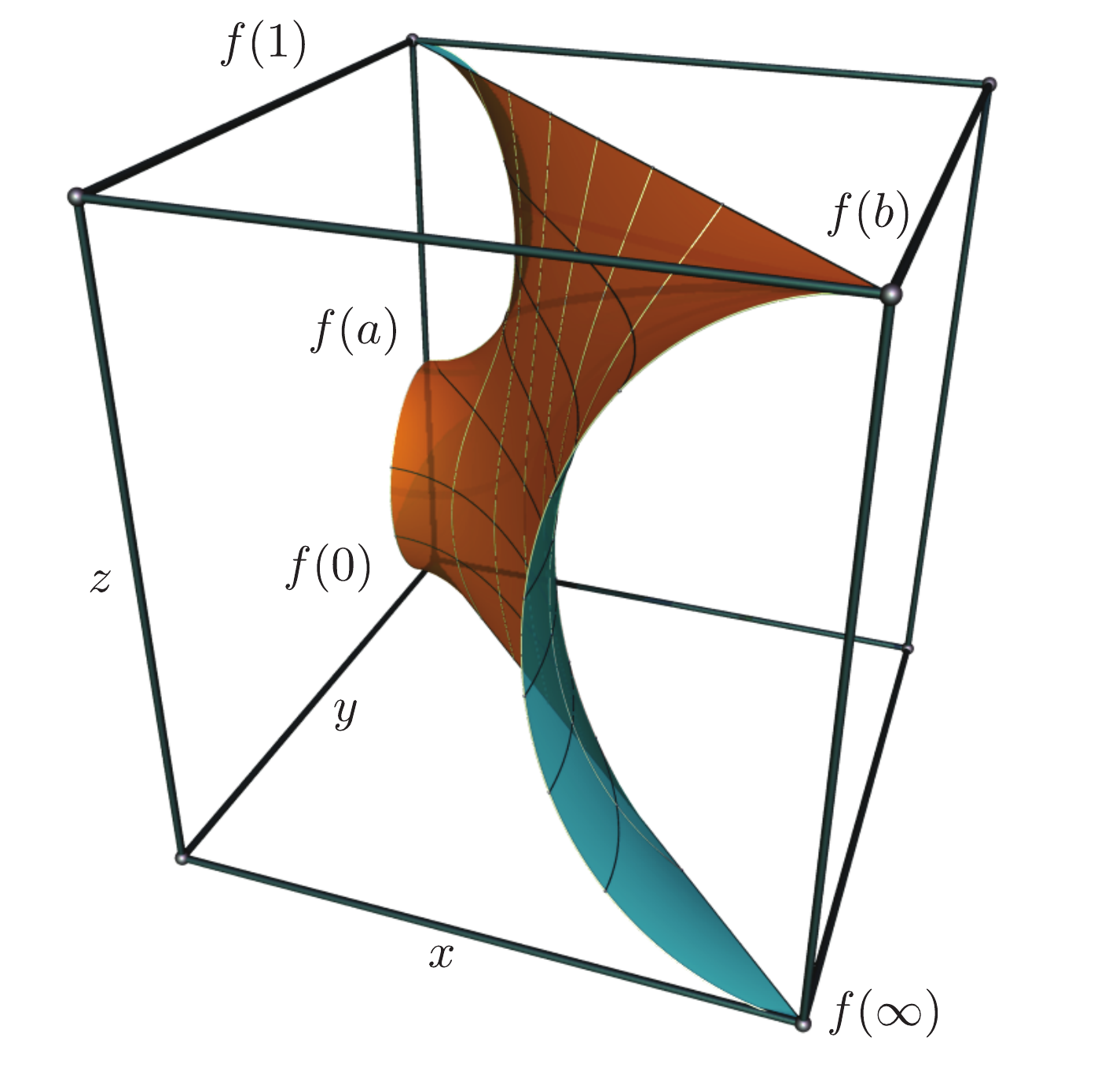}}
   \subfigure[CS-4 Pentagon]{\includegraphics[width=\fw]{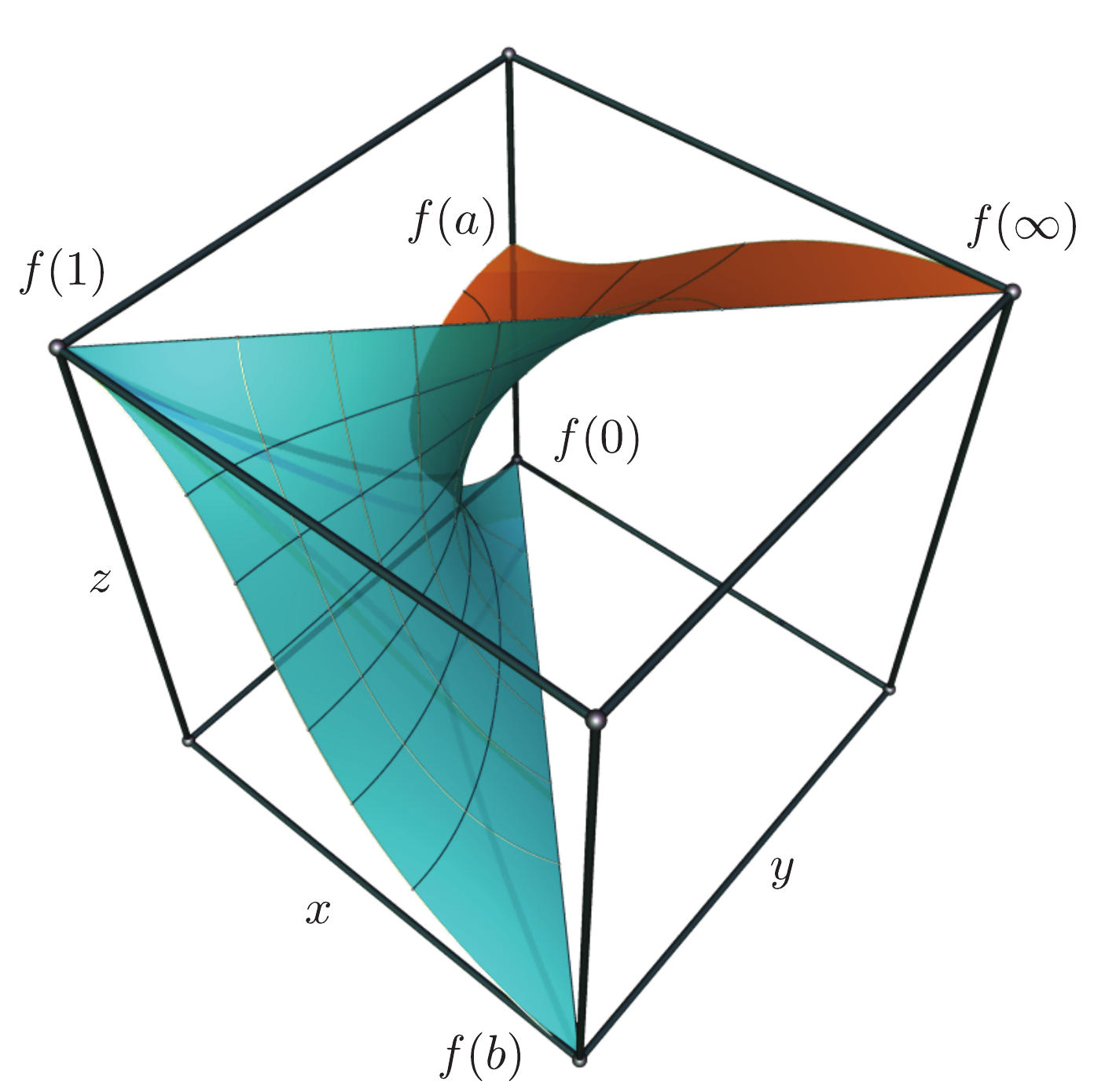}}
 \end{center}
 \caption{Fundamental Pentagons}
 \label{fig:FundamentalPentagons}
\end{figure}

The following Theorem shows that there is a 1-parameter family of solutions to this period condition, proving the first part of Theorem \ref{thm:penta}.
\begin{theorem}
 For any $b>1$ there is an $a\in(0,1)$ satisfying

\[
\pi_1:= \frac{\int_a^1|\varphi_1|}{\int_0^a|\varphi_1|}=\frac{\int_a^1|\varphi_2|}{\int_0^a|\varphi_2|} =:\pi_2
\]

\end{theorem}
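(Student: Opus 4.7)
The plan is to fix $b>1$ and apply the intermediate value theorem to the continuous function
\[
F(a) := \pi_1(a,b) - \pi_2(a,b), \qquad a\in(0,1).
\]
Continuity on $(0,1)$ is routine: each of the four period integrals is an improper integral whose integrand is dominated, locally in $a$, by an integrable function independent of $a$, so dominated convergence gives continuity. The substance of the argument is to determine the sign of $F(a)$ as $a$ approaches each endpoint of $(0,1)$ and show the sign reverses.

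For the limit $a\to 0^+$, I would rescale the near-zero integrals by the substitution $z=at$. A direct computation of the leading behavior gives
\[
\int_0^a|\varphi_1|\sim a^{3/4}\,b^{-1/4}\int_0^1 t^{-3/4}(1-t)^{1/2}\,dt, \qquad \int_0^a|\varphi_2|\sim a^{1/4}\,b^{-3/4}\int_0^1 t^{-1/4}(1-t)^{-1/2}\,dt,
\]
while the far integrals $\int_a^1|\varphi_i|$ converge by dominated convergence to the positive finite integrals obtained by formally setting $a=0$ in $|\varphi_i|$ (the resulting singularities at $z=0$ have orders strictly larger than $-1$). Hence $\pi_1 \sim C_1(b)\,a^{-3/4}$ and $\pi_2\sim C_2(b)\,a^{-1/4}$ with positive constants, so $\pi_1/\pi_2\to\infty$ and $F(a)>0$ for small $a$.

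For $a\to 1^-$, the symmetric substitution $z=1-(1-a)s$ yields
\[
\int_a^1|\varphi_1|\sim (1-a)^{3/4}\,(b-1)^{-1/4}\int_0^1 s^{-3/4}(1-s)^{1/2}\,ds, \qquad \int_a^1|\varphi_2|\sim (1-a)^{1/4}\,(b-1)^{-3/4}\int_0^1 s^{-1/4}(1-s)^{-1/2}\,ds,
\]
and the integrals $\int_0^a|\varphi_i|$ tend to positive constants. Therefore $\pi_1\sim C_3(b)(1-a)^{3/4}$ and $\pi_2\sim C_4(b)(1-a)^{1/4}$, giving $\pi_1/\pi_2\to 0$ and $F(a)<0$ for $a$ near $1$. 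The intermediate value theorem applied to $F$ on $(0,1)$ then yields the desired $a\in(0,1)$.

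The main obstacle is bookkeeping in the asymptotic analysis: one must correctly identify the two distinct scaling exponents ($3/4$ for $\varphi_1$ versus $1/4$ for $\varphi_2$) at each endpoint, which is what drives the sign reversal of $F$. Once the leading orders are verified, continuity of $F$ away from the endpoints and the IVT finish the proof; uniqueness of $a$ is not asserted, so the simple sign argument suffices.
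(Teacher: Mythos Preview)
Your proposal is correct and follows essentially the same approach as the paper: fix $b$, study the asymptotics of the four period integrals as $a\to 0^+$ and $a\to 1^-$ via the substitutions $z=at$ and $z=1-(1-a)s$, extract the scaling exponents $a^{\pm 3/4}$ for $\pi_1$ versus $a^{\pm 1/4}$ for $\pi_2$ (and analogously in $1-a$), and conclude by the intermediate value theorem. The paper works with the coarser relation $\approx$ (two-sided comparability) rather than your sharper $\sim$, and splits intervals instead of invoking dominated convergence, but the structure and the decisive exponents are identical.
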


\begin{proof} We fix $b$ and use an intermediate value argument, comparing the behavior of $\pi_1$ and $\pi_2$ at $0$ and $1$, see Figure \ref{fig:ss4-asymptotic}. Here, as well as in the corresponding proof for the CS-4 surface, we consider the asymptotic behavior of the period integrals, using the following convention:

\begin{definition}
Given two positive real valued functions  $f$ and  $g$ defined near some $a\in \R\cup \{\infty\}$, we say $f \approx g$ as $x\to a$ if 

\[
\frac1Cg(x) \leq f(x) \le Cg(x)
\] 

for some $C>0$ and $x$ approaching $a$.\\

\end{definition}

\begin{figure}[H]
   \centering
   \includegraphics[width=3in]{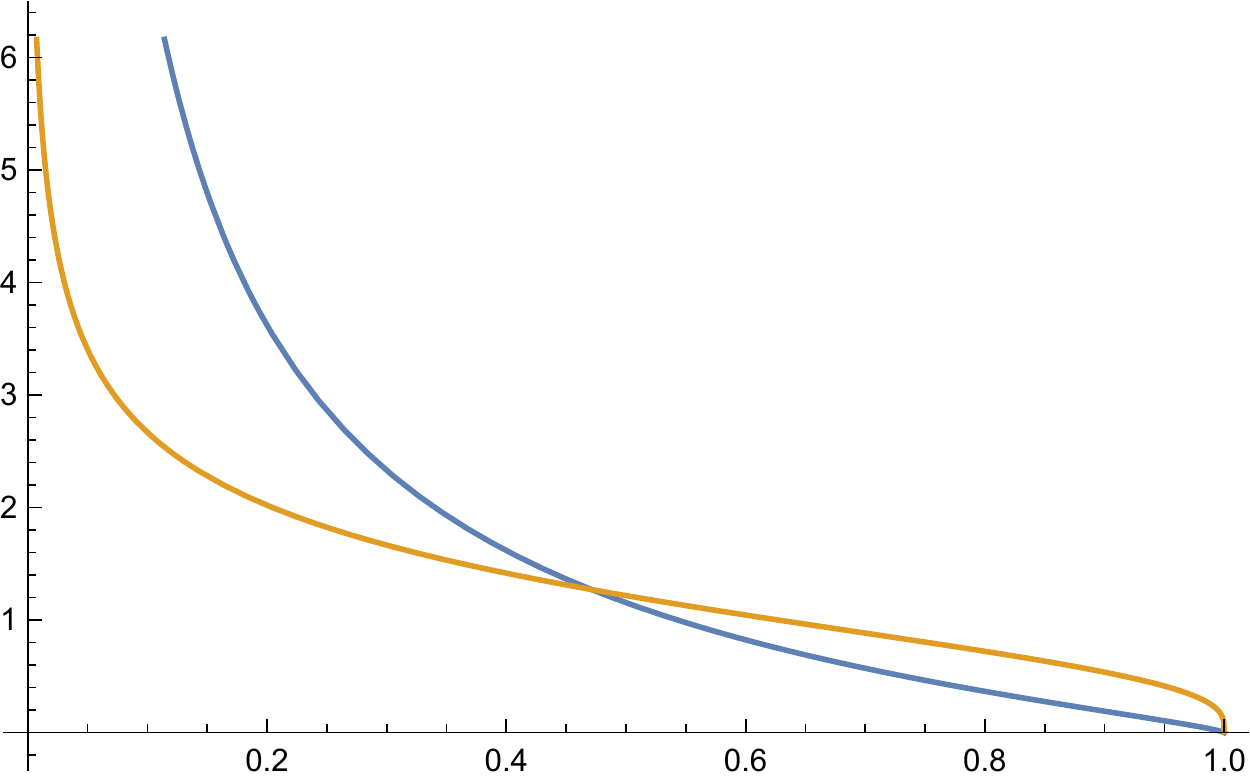}
   \caption{Graphs of $\pi_1$ and $\pi_2$ as functions of $a$ for $b$ fixed}
   \label{fig:ss4-asymptotic}
\end{figure}

First, note that our integrals only involve $0 < z <1$, in which case $|z - b| = b - z$ and $0<b-1<|z-b|<b$.  Thus $|z-b|$ is bounded away from 0 and hence $\approx 1$.\\

As $a\to0$, we have on $(a,1)$:

\begin{flalign*}
\int_a^1|\varphi_1|&\approx \int_a^1 z^{-3/4}(z-a)^{1/2}(1-z)^{-3/4}\, dz&&\\
&=\int_a^{1/2} z^{-3/4}(z-a)^{1/2}(1-z)^{-3/4}dz + \int_{1/2}^1 z^{-3/4}(z-a)^{1/2}(1-z)^{-3/4}\, dz&&\\
&\approx \int_a^{1/2} z^{-3/4}(z-a)^{1/2}dz + \int_{1/2}^1 (1-z)^{-3/4}\, dz&&\\
&\approx 1 + 1
\end{flalign*}

\begin{flalign*}
\int_a^1|\varphi_2|&\approx \int_a^1 z^{-1/4}(z-a)^{-1/2}(1-z)^{-1/4}\, dz&&\\
&\approx 1
\end{flalign*}

Next, on $(0,a)$:

\begin{flalign*}
\int_0^a|\varphi_1|&\approx \int_0^a z^{-3/4}(a-z)^{1/2}(1-z)^{-3/4}\, dz&&\\
&\approx \int_0^a z^{-3/4}(a-z)^{1/2}dz&&\\
&= a^{-3/4 + 1/2 + 1}\int_0^1t^{-3/4}(1-t)^{1/2}\, dt \qquad \text{using $z=at$}&&\\
&\approx a^{3/4}
\end{flalign*}

\begin{flalign*}
\int_0^a|\varphi_2|&\approx \int_0^a z^{-1/4}(a-z)^{-1/2}(1-z)^{-1/4}\, dz&&\\
&\approx a^{1/4}
\end{flalign*}

Thus, as $a\to0$, we have,

\[
\frac{\int_a^1|\varphi_1|}{\int_0^a|\varphi_1|}\approx a^{-3/4}\gg a^{-1/4}\approx\frac{\int_a^1|\varphi_2|}{\int_0^a|\varphi_2|}
\]

Using similar arguments we obtain for  $a\to1$

\begin{align*}
 \int_a^1|\varphi_1|  \approx {}& (1-a)^{3/4} & \int_0^a|\varphi_1|  \approx {}& 1 \\
\int_a^1|\varphi_2|  \approx {}& (1-a)^{1/4} & \int_0^a|\varphi_2|  \approx {}& 1 \\       
\end{align*}

%
%
%
%

Hence, as $a\to1$

\[
\frac{\int_a^1|\varphi_1|}{\int_0^a|\varphi_1|}\approx (1-a)^{3/4}\ll (1-a)^{1/4}\approx\frac{\int_a^1|\varphi_2|}{\int_0^a|\varphi_2|}
\]

%

By the intermediate value theorem, for any fixed $b>1$ there is an $a\in(0, 1)$ making the ratios equal.
\end{proof}

\section{The CS-4 Surface}\label{sec:cs4}

\begin{figure}[H]
   \centering
   \includegraphics[width=6in]{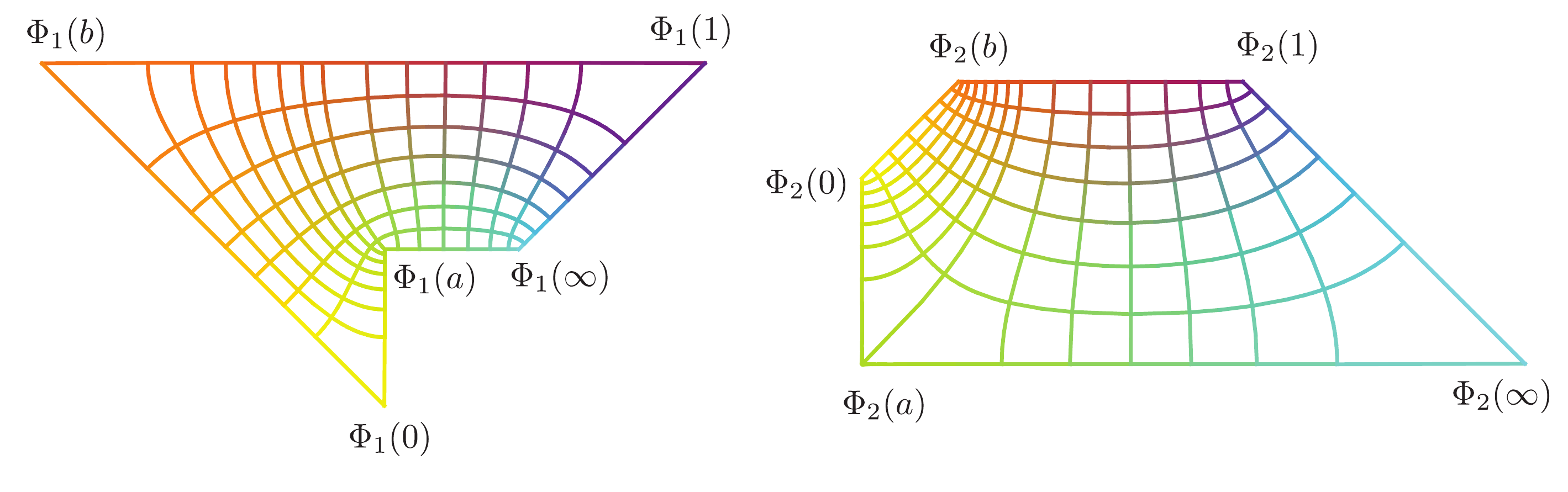}
   \caption{Schwarz-Christoffel images of Upper Half Plane for CS-4}
\label{fig:CS-4-schwarz}
\end{figure}

As with the SS-4 surface, we construct the CS-$4$ surface using Schwarz-Christoffel maps. We define:

\begin{align*}
&G\, dh = \rho(z-a)^{1/2}z^{-3/4}(z-b)^{-3/4}(1-z)^{-3/4}\, dz\\
&\frac1G\, dh = \frac1\rho(z-a)^{-1/2}z^{-1/4}(z-b)^{-1/4}(1-z)^{-1/4}\, dz
\end{align*}

where $a < 0 < b < 1$. Note that the ordering of the parameters is different from the SS-4 case. Here, $\varphi_1$ and $\varphi_2$ are positive on $(b,1)$.

These forms become the Weierstrass 1-forms $G\, dh $ and $\frac1G\, dh$ after scaling by a real factor. Consequently, the height differential is given by
\[
dh = z^{-1/2}(z-b)^{-1/2}(1-z)^{-1/2}\, dz
\]
and the Gauss map by
\[
G(z) = \rho(z-a)^{1/2}z^{-1/4}(z-b)^{-1/4}(1-z)^{-1/4}
\]

Then we have as in Section \ref{sec:ss4}:

\begin{lemma}
For any $\rho>0$ and $a < 0 < b < 1$, the Weierstrass map $f(z)$
maps the upper half plane to a minimal pentagon with three edges lying in vertical symmetry planes and two horizontal edges  which are straight line segments. One symmetry plane has its normal parallel to the $x$-axis, and two have normals parallel to the $y$-axis.  One horizontal line makes angle $\frac{\pi}4$ with the $x$-axis, and the other makes angle  $-\frac{\pi}4$.
\end{lemma}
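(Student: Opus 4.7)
The strategy mirrors the proof of Lemma \ref{lem:symss}: apply Proposition \ref{prop:symmetries} on each of the five subintervals of $\R$ determined by the branch points $a<0<b<1$, namely $(-\infty,a)$, $(a,0)$, $(0,b)$, $(b,1)$, and $(1,\infty)$, and read off the edge type (symmetry curve versus horizontal straight line) and its angle from the arguments of $dh$, $G\,dh$, and $\frac1G\,dh$.

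First I would verify integrability at each of $a,0,b,1,\infty$. The sums of exponents of the three integrands all exceed $-1$ (e.g.\ $G\,dh\sim z^{-7/4}$ at infinity), so $\Omega_3,\Phi_1,\Phi_2$ extend continuously to $\overline U$ and the map $f$ sends $\partial U$ to a polygonal curve with vertices at $f(a),f(0),f(b),f(1),f(\infty)$. Since the L{\'o}pez-Ros factor $\rho>0$ is real, it is enough to track the arguments of the two Schwarz-Christoffel integrands $\varphi_1$, $\varphi_2$ (in the notation of Figure \ref{fig:CS-4-schwarz}) and $dh$. Because $dh$ is itself a Schwarz-Christoffel integrand, its argument on the four intervals $(-\infty,0),(0,b),(b,1),(1,\infty)$ equals $-\pi,-\pi/2,0,\pi/2$ respectively, so $\Omega_3$ maps $U$ onto a rectangle whose vertical sides are the images of $(0,b)$ and $(1,\infty)$.

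Using the principal branch convention $\arg(z-c)=\pi$ for real $z<c$ approached from $U$, and $\arg(1-z)=-\pi$ for real $z>1$, a direct summation of exponents gives, on the five intervals in order, $\arg\varphi_1 = -\pi,\,-3\pi/2,\,-3\pi/4,\,0,\,3\pi/4$ and $\arg\varphi_2 = -\pi,\,-\pi/2,\,-\pi/4,\,0,\,\pi/4$. Then Proposition \ref{prop:symmetries} reads off the boundary data as follows. On $(-\infty,a)$ and $(b,1)$ all three arguments are $\equiv 0\pmod\pi$, so each such edge lies in a vertical symmetry plane with $\alpha=0$, i.e.\ with normal parallel to the $y$-axis. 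On $(a,0)$ the differential $dh$ is still parallel to the real axis while $\arg\varphi_1\equiv\pi/2\equiv -\arg\varphi_2\pmod\pi$, giving a symmetry plane with $\alpha=\pi/2$, i.e.\ with normal parallel to the $x$-axis. On $(0,b)$ and $(1,\infty)$ one has $\arg dh\equiv\pi/2\pmod\pi$, so by the straight-line clause of Proposition \ref{prop:symmetries} these edges are horizontal straight segments; their angles with the $x$-axis are $\alpha\equiv -3\pi/4\equiv\pi/4\pmod\pi$ on $(0,b)$ and $\alpha\equiv 3\pi/4\equiv -\pi/4\pmod\pi$ on $(1,\infty)$, matching the statement.

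This is essentially a bookkeeping exercise once the argument calculus is set up, so I do not expect a substantive obstacle. The only place to be careful is in tracking signs and reductions modulo $\pi$; since Proposition \ref{prop:symmetries} identifies segment directions only up to the $180^\circ$ rotation ambiguity, matching arguments mod $\pi$ is all that is needed, and the symmetry between the $\varphi_1$ and $\varphi_2$ arguments (opposite signs up to a common shift by $\pi$) is the mechanism that produces the required $\pm\alpha$ pairing in every interval.
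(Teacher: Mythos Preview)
Your proof is correct and follows exactly the paper's approach: the paper states this lemma with the remark ``Then we have as in Section \ref{sec:ss4},'' relying on the same application of Proposition \ref{prop:symmetries} interval by interval that you carry out explicitly. Your argument computations for $\varphi_1$, $\varphi_2$, and $dh$ on each of the five intervals are accurate, and the integrability check at the vertices is a welcome detail the paper leaves implicit.
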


For such a pentagon to extend via symmetries to a triply-periodic surface, the vertices of our pentagon must lie on the edges of a box as shown in Figure \ref{fig:FundamentalPentagons}(b).  This translates to the following period condition:  The vertices $f(a)$ and $f(0)$ must have the same $y$-coordinate, and the vertices $f(0)$ and $f(1)$ must have the same $x$-coordinate.

 Our forms must thus satisfy:

 \[
\re\int_a^0\omega_2 = 0, \quad\text{and}\quad \re\int_0^1\omega_1 = 0
 \]

\begin{figure}[H]
   \centering
   \includegraphics[width=6in]{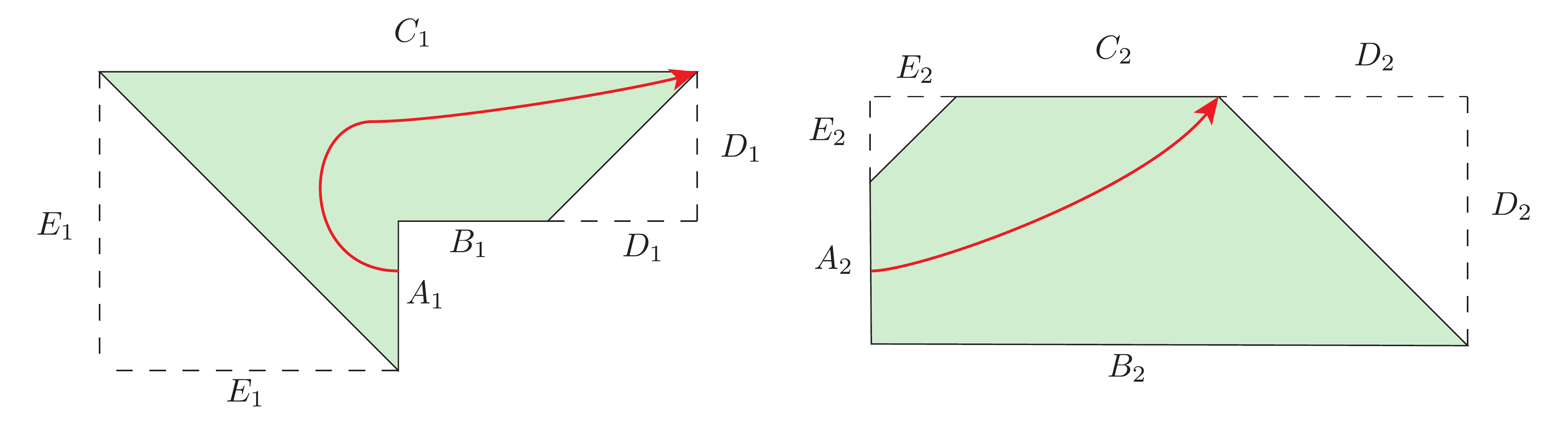}
   \caption{Edge lengths and cycles for CS-4}
   \label{fig:CS-4-lengths}
\end{figure}

For simplicity, we will represent these integrals in terms of side lengths of the Schwarz-Christoffel pentagons shown in Figure \ref{fig:CS-4-lengths}.

\begin{lemma}
There exists a unique positive L{\'o}pez-Ros factor $\rho$ such that 
the surface parametrized by the Weierstrass data above is of type CS   if and only if the flat structures satisfy  
\[
\sigma_1:=\frac{B_1 + C_1}{A_1} = \frac{B_2 + C_2}{A_2}=:\sigma_2 \ .
\]
\end{lemma}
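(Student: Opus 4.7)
The plan is to translate the two period equations $\re\int_a^0\omega_2=0$ and $\re\int_0^1\omega_1=0$ into two linear equations in $\rho^2$, expressed in terms of the edge lengths of the two Schwarz-Christoffel pentagons, and then observe that simultaneous solvability by a single positive $\rho$ is equivalent to the ratio equality $\sigma_1=\sigma_2$.

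First I would read off the phases of $\varphi_1$ and $\varphi_2$ on each of the three real intervals $(a,0)$, $(0,b)$, $(b,1)$ from the Schwarz-Christoffel exponents, exactly as in Section \ref{sec:ss4}. A short branch-cut computation shows that on $(a,0)$ both forms are purely imaginary, with $\int_a^0\varphi_1 = iA_1$ and $\int_a^0\varphi_2 = -iA_2$, where $A_j>0$ denotes the length of the vertical edge of the $\Phi_j$-pentagon along $[a,0]$ recorded in Figure \ref{fig:CS-4-lengths}. On $(0,b)$ the two integrands carry constant phases $e^{-i3\pi/4}$ and $e^{-i\pi/4}$ respectively, and on $(b,1)$ both are positive real; with the figure's labeling of $B_j$ and $C_j$, the real parts assemble into $\re\int_0^1\varphi_j = B_j+C_j$.

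Next I would substitute into the period equations. Using $\omega_2 = \tfrac{i}{2}(\rho\varphi_1 + \tfrac{1}{\rho}\varphi_2)$ together with the purely imaginary values on $[a,0]$, the condition $\re\int_a^0\omega_2=0$ reduces to $\rho^2 A_1 = A_2$, which already determines the unique positive L\'opez-Ros factor $\rho = \sqrt{A_2/A_1}$ realizing the first period condition. Then using $\omega_1 = \tfrac{1}{2}(\tfrac{1}{\rho}\varphi_2 - \rho\varphi_1)$, the condition $\re\int_0^1\omega_1=0$ becomes $\rho^2(B_1+C_1) = B_2+C_2$.

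Both conditions hold for the same positive $\rho$ if and only if the two values of $\rho^2$ coincide, which rearranges to $\sigma_1 = \sigma_2$. The main obstacle I anticipate is the bookkeeping in the first step: the edges of the two pentagons along $(0,b)$ slant at angles $-3\pi/4$ and $-\pi/4$, so the horizontal components of those edges combine with the horizontal edges along $(b,1)$ with different signs in the two pentagons, and one must verify that the paper's choice of labels in Figure \ref{fig:CS-4-lengths} is exactly the one that makes the real parts collapse into the symmetric form $B_j+C_j$. Once this identification is in place, the rest of the argument is purely algebraic, and uniqueness of $\rho$ follows from either of the two equations separately.
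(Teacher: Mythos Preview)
Your overall strategy --- reduce the two period conditions to two equations of the form $\rho^2 X_1 = X_2$ and then equate the two resulting values of $\rho^2$ --- is exactly the paper's. Your treatment of the first condition is correct: on $(a,0)$ the integrals are $iA_1$ and $-iA_2$, and $\re\int_a^0\omega_2=0$ gives $\rho^2 A_1=A_2$.

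The gap is in the second condition, and it is precisely the obstacle you flagged. In the paper's labeling (Figure~\ref{fig:CS-4-lengths}) the letters $B_j$ and $C_j$ are the edges over $(-\infty,a)$ and $(b,1)$, while the edges over $(0,b)$ and $(1,\infty)$ are $E_j$ and $D_j$. With the phases you correctly computed, the edge over $(0,b)$ contributes a \emph{negative} real part for $\varphi_1$ (angle $-3\pi/4$) but a \emph{positive} one for $\varphi_2$ (angle $-\pi/4$). Hence the real parts are genuinely asymmetric: $\re\int_0^1\varphi_1 = C_1 - E_1 = B_1 + D_1$ while $\re\int_0^1\varphi_2 = C_2 + E_2$. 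So the second condition first reads $\frac{B_1+D_1}{A_1}=\frac{C_2+E_2}{A_2}$, not $\frac{B_1+C_1}{A_1}=\frac{B_2+C_2}{A_2}$. The paper then invokes the closure relations of the two pentagons, $D_1=\tfrac12(C_1-B_1-A_1)$ and $E_2=\tfrac12(B_2-C_2-A_2)$, to eliminate $D_1$ and $E_2$; only after this substitution (and subtracting $1$ from both sides) does the condition reduce to $\sigma_1=\sigma_2$. This extra algebraic step is missing from your proposal, and no choice of labeling makes it disappear.
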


\begin{proof}
In terms of $\varphi_1, \varphi_2$, the  condition $\re\int_a^0\omega_2 = 0$  translates to $\im\int_a^0\varphi_1 = -\im\int_a^0\varphi_2$.  Since these integrals are both imaginary with opposite signs, this is equivalent to $A_1 = A_2$.  \\

The  condition $\re\int_0^1\omega_1 = 0$ translates to $\re\int_0^1\varphi_1 = \re\int_0^1\varphi_2$.  On the pentagon flat structures, this is equivalent to $B_1 + D_1 = C_2 + E_2$, as can be seen in Figure \ref{fig:CS-4-lengths}.

The first condition can be satisfied by choosing $\rho$, in which case the second condition becomes $\frac{B_1 + D_1}{A_1} = \frac{C_2 + E_2}{A_2}$.  For simplicity, we will express this condition just in terms of $A, B$, and $C$ lengths.  For this, we use the relations of the flat structure edge lengths:

\[
D_1 = E_1 - A_1 = C_1 - D_1 - B_1 -A_1 \Rightarrow D_1 = \frac12(C_1 - B_1 - A_1)
\]

\[
E_2 = D_2 - A_2 = B_2 - C_2 - E_2 - A_2 \Rightarrow E_2 = \frac12(B_2 -C_2 - A_2)
\]

The condition can thus be written as:

\[
\frac{2B_1 + C_1 - B_1 - A_1}{2A_1} = \frac{2C_2 + B_2 - C_2 - A_2}{2A_2} \Leftrightarrow \frac{B_1 + C_1}{A_1} = \frac{B_2 + C_2}{A_2}
\]

which completes the proof.

\end{proof}

We next prove the second half of Theorem \ref{thm:penta}:

\begin{theorem}\label{thm:cs4exist}
For any $a < 0$ there is a $b_a\in (0, 1)$ guaranteeing the equality $\frac{B_1 + C_1}{A_1} = \frac{B_2 + C_2}{A_2}$. Thus there is a one-parameter family of surfaces which satisfy the period condition.
\end{theorem}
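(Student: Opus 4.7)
The plan is to mirror the existence proof of the SS-4 family given above. I fix $a<0$ and regard $\sigma_1, \sigma_2$ as continuous functions of $b\in (0,1)$; the goal is to show that $\sigma_1-\sigma_2$ changes sign on $(0,1)$ and then invoke the intermediate value theorem. Continuity follows from the continuous dependence on $b$ of the integrals $A_i, B_i, C_i$, whose integrands involve only integrable power singularities at the Schwarz-Christoffel critical points.

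First I would analyze the endpoint $b \to 1^-$, where the critical points at $z=b$ and $z=1$ collide. For $\varphi_1$ the sum of the exponents at the confluence is $-3/2$, which is non-integrable: substituting $z = b + (1-b)t$ on $(b,1)$ and an analogous substitution just below $b$ gives $B_1 \approx C_1 \approx (1-b)^{-1/2}$, while $A_1$ remains bounded because $(a,0)$ is separated from $z=1$. Hence $\sigma_1(b) \to \infty$. For $\varphi_2$ the exponent sum is only $-1/2$, so the confluence remains integrable; one finds $A_2, B_2$ tending to finite positive limits and $C_2 \approx (1-b)^{1/2}\to 0$, giving $\sigma_2(b)$ a finite positive limit. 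Thus $\sigma_1 > \sigma_2$ for $b$ close to $1$.

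Next I would analyze $b\to 0^+$, where the critical points at $z=0$ and $z=b$ collide. The substitutions $z = bt$ on $(0,b)$ and $z=-bu$ on $(-b,0)$ show that all three integrals $A_1, B_1, C_1$ blow up at the common rate $b^{-1/2}$, with coefficients of the form $|a|^{1/2}$ times specific beta-function values; hence $\sigma_1(b)$ converges to a finite limit $L_1$ expressible as a ratio of beta values. The weaker exponents of $\varphi_2$ keep $A_2$ and $C_2$ bounded, converging to explicit integrals over $(a,0)$ and $(0,1)$, while $B_2 \approx b^{1/2}\to 0$. Thus $\sigma_2(b) \to L_2 = C_2^{\text{lim}}/A_2^{\text{lim}}$, also finite.

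The main obstacle is the strict inequality $L_1 < L_2$ at $b = 0^+$, which is what forces the required sign change. Unlike the SS-4 case, where the two ratios separated by different polynomial rates at both endpoints and no delicate comparison was needed, here the comparison at the left endpoint reduces to an inequality between specific numerical constants. I would handle this by writing $L_1$ and $L_2$ explicitly, applying standard beta-function identities, and verifying the inequality either analytically or by a direct lower bound for the integral defining $L_2$ (for instance by restricting to a subinterval on which the integrand dominates an explicit comparison function matching $L_1$). Once $L_1 < L_2$ is established, continuity of $\sigma_1-\sigma_2$ on $(0,1)$ and the IVT produce the required $b_a$.
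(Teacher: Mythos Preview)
Your overall strategy (fix $a$, compare $\sigma_1,\sigma_2$ at the two endpoints of $b$, apply the IVT) matches the paper, and your analysis at $b\to 1^-$ reaches the right conclusion. There are, however, two genuine problems at the $b\to 0^+$ endpoint.

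First, you have misidentified the edge $B_i$. In the Schwarz--Christoffel labeling of Figure~\ref{fig:CS-4-lengths}, $B_i=\int_{-\infty}^a|\varphi_i|$, not an integral over $(0,b)$; the interval $(0,b)$ carries the edge $E_i$, which does not appear in the period condition. Consequently $B_1$ and $B_2$ are both $\approx 1$ uniformly in $b$ (the interval $(-\infty,a]$ never meets the colliding points $0$ and $b$), so your claims $B_1\approx b^{-1/2}$ and $B_2\approx b^{1/2}$ are incorrect. The correct picture at $b\to 0$ is $A_1,C_1\approx b^{-1/2}$ while $B_1\approx 1$, and $A_2,B_2,C_2\approx 1$.

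Second, and more seriously, the strict inequality $L_1<L_2$ cannot be reduced to a comparison of beta values as you propose. The limit $L_1=\lim\sigma_1$ does turn out to equal $1$, because the leading $b^{-1/2}$ coefficients of $A_1$ and $C_1$ coincide, but $L_2=\lim\sigma_2=(B_2^{\lim}+C_2^{\lim})/A_2^{\lim}$ involves three integrals each containing the factor $(z-a)^{\pm 1/2}$; these depend on $a$ and are not beta integrals. So you would be left proving an $a$-dependent integral inequality ``by hand'', which is exactly the difficulty the paper avoids. The paper's key idea here is geometric rather than analytic: the edge-length relations of the pentagons in Figure~\ref{fig:CS-4-lengths} give $C_1=A_1+B_1+2D_1$, and showing $D_1/A_1\to 0$ yields $\sigma_1\to 1$; on the other side, the pentagon shape forces $B_2>A_2$ for \emph{every} $b$, so $\sigma_2\ge 1+C_2/A_2>1$ with a uniform gap. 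That pair of observations is what your outline is missing.
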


\begin{figure}[H]
   \centering
   \includegraphics[width=3in]{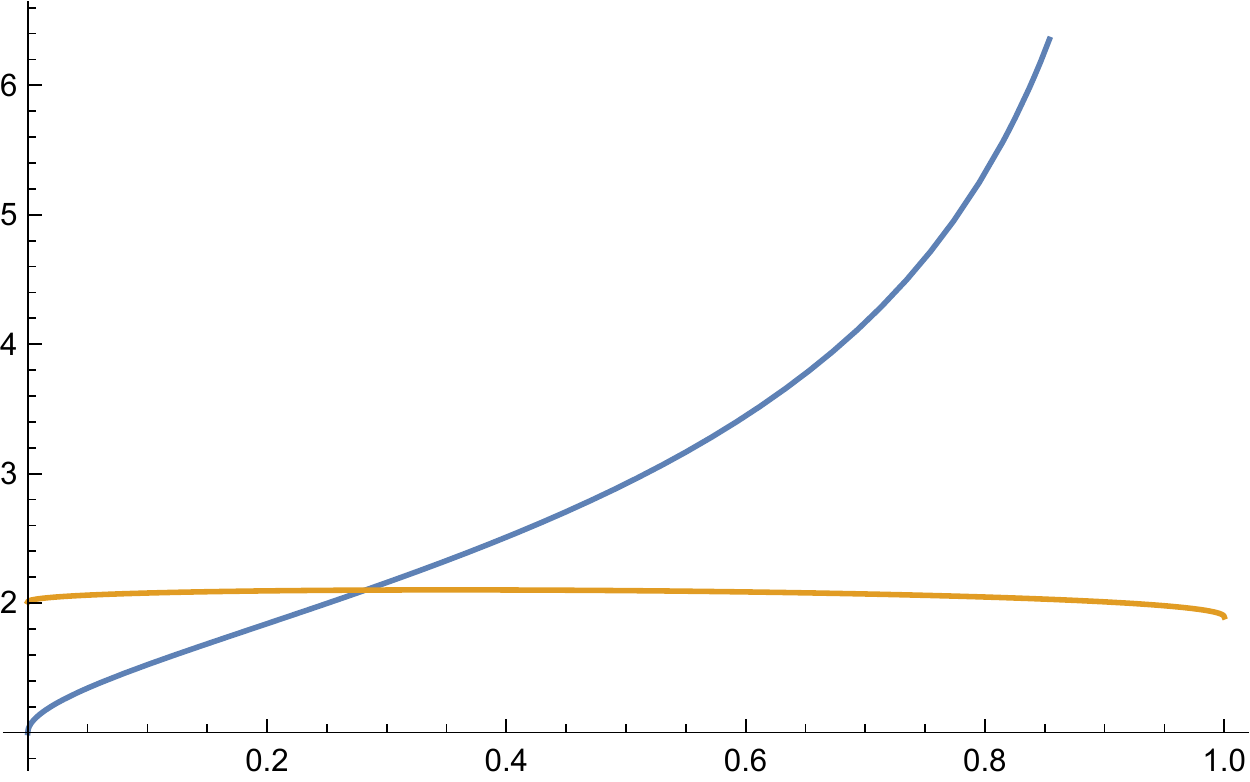}
   \caption{Graphs of $\sigma_1$ and $\sigma_2$ as functions of $b$ for $a=-3$}
   \label{fig:cs4-asymptotic}
\end{figure}

\begin{proof}

We fix $a<0$ and first send $b \to 1-$. We consider the effect on the integrals $A_1$,$ B_1$, $C_1$, $A_2$, $B_2$, and $C_2$.

First, $A_1\approx 1$ and $A_2\approx 1$, because $b$  and $1$ are outside the integration interval $[a,0]$.

%
%
%

For $B_1$ and $B_2$, $(b-z)^{-3/4}(1-z)^{-3/4}$ is bounded on the integration interval $(-\infty,a]$, so that again
$B_1\approx 1$ and $B_2\approx 1$.

%
%
%

For the $C_1, C_2$ integrals, we obtain
\begin{flalign*}
C_1 &= \int_b^1 (z - a)^{1/2} z^{-3/4}(z-b)^{-3/4}(1-z)^{-3/4} dz&& \\
&\approx \int_b^1 (z-b)^{-3/4}(1-z)^{-3/4} dz\\
&= (1-b)^{-1/2}\int_0^1(1 - t)^{-3/4}t^{-3/4}dt \qquad \text{using}\ z=1 - (1-b)t\\
&\approx (1-b)^{-1/2} \ ,
\end{flalign*}
and similarly
\[
C_2 \approx (1-b)^{1/2} \ .
\]

Thus as $b\to 1$, we have (see Figure \ref{fig:cs4-asymptotic})

\[
\frac{B_1 + C_1}{A_1}\approx (1-b)^{-1/2}\gg 1 \approx \frac{B_2 + C_2}{A_2} \ .
\]

Now we consider the case $b\to 0$.

For $A_1$ and $A_2$, singularities occur on both ends of the integration interval $[a,0]$ which we therefore split up and treat separately.

Note that for $a < z < \frac a2$, $-z \approx b - z \approx 1 - z \approx 1$ and for $\frac a2 < z < 0$, $z - a \approx 1 - z \approx 1$. Thus

\begin{flalign*}
A_1 &= \int_a^0 (z-a)^{1/2}(-z)^{-3/4}(b-z)^{-3/4} (1-z)^{-3/4} dz&&\\
&\approx \int_a^{a/2}  (z - a)^{1/2} dz + \int_{a/2}^0 (-z)^{-3/4}(b-z)^{-3/4} dz\\
\end{flalign*}

Setting $z = -bt$, we obtain $b-z = b (1+t)$, allows us to estimate further:

\begin{flalign*}
A_1 &\approx 1 +  b^{-1/2} \int_0^{-a/2b} t^{-3/4}(1+t)^{-3/4} dt &&\\
&=1+b^{-1/2} \bigg( \int_0^1 t^{-3/4}(1+t)^{-3/4} dt + \int_1^{-a/2b} t^{-3/4} (1+t)^{-3/4} dt \bigg)
\end{flalign*}

The first term in brackets can be integrated and thus is $\approx 1$, and for the second term we use the observation that $1 < t  < 1+t < 2t$

\begin{flalign*}
\int_1^{-a/2b} t^{-3/4} (1+t)^{-3/4} &\approx \int_1^{-a/2b} t^{-3/2} dt&&\\
&\approx \sqrt b
\end{flalign*}

This shows $A_1 \approx 1 + b^{-1/2}(1 + b^{1/2}) \approx b^{-1/2}$ as $b \to 0$.

For $A_2$, the analysis is similar:

\begin{flalign*}
A_2 &= \int_a^0 (z - a)^{-1/2} (-z)^{-1/4}(b-z)^{-1/4} (1-z)^{-1/4} dz &&\\
&\approx 1 + \int_{a/2}^0 (-z)^{-1/4}(b-z)^{-1/4} dz \\
&\approx 1 + b^{1/2}(1 + b^{-1/2}) \approx 1 \\
\end{flalign*}


For $B_1$and $B_2$, we split at $2a-1$ like so:

\begin{flalign*}
B_1 &= \int_{-\infty}^a (a - z)^{1/2} (-z)^{-3/4}(b-z)^{-3/4}(1-z)^{-3/4} \, dz&& \\
&\approx \int_{-\infty}^{2a-1}(-z)^{-7/4} \, dz+  \int_{2a-1}^a (a - z)^{1/2} (-z)^{-3/4}(b-z)^{-3/4}(1-z)^{-3/4} \, dz\\
&\approx 1 \ ,
\end{flalign*}

and likewise

\begin{flalign*}
B_2 &= \int_{-\infty}^a (a - z)^{-1/2} (-z)^{-1/4}(b-z)^{-1/4}(1-z)^{-1/4} \, dz &&\\
&= \int_{-\infty}^{2a-1} (-z)^{-5/4}\, dz + \int_{2a-1}^a (a - z)^{-1/2} (-z)^{-1/4}(b-z)^{-1/4}(1-z)^{-1/4} \, dz \\
& \approx 1
\end{flalign*}

For  $C_1, C_2$, we split at $\frac12$:

\begin{flalign*}
C_1 &= \int_b^1 (z - a)^{1/2} z^{-3/4}(z-b)^{-3/4}(1-z)^{-3/4} \,dz&& \\
&\approx \int_{b}^{1/2} z^{-3/4}(z-b)^{-3/4}\, dz + \int_{1/2}^1 (1-z)^{-3/4} dz \\
&\approx  b^{-1/2} \int_{1}^{\frac1{2b} }t^{-3/4}(t-1)^{-3/4}dt + 1  \qquad \text{using }\ z=bt\\
\end{flalign*}


Because of

\[
\int_{1}^{2}t^{-3/4}(t-1)^{-3/4}dt \leq \int_{1}^{1/2b}t^{-3/4}(t-1)^{-3/4}dt \leq \int_{1}^{\infty}t^{-3/4}(t-1)^{-3/4}dt
\]

 $C_1\approx b^{-1/2}$. A similar computation shows  $C_2\approx 1$.

%

We thus have that for $b\to 0$, both period quotients are bounded and bounded away from zero:

\[
\frac{B_1 + C_1}{A_1} \approx \frac{1 + b^{-1/2}}{b^{-1/2}} \approx 1, \qquad \frac{B_2 + C_2}{A_2} \approx \frac{1 + 1}1 = 1
\]

To obtain a more accurate comparison between these ratios, we consider the flat structures corresponding to $\varphi_1$ and $\varphi_2$, and argue geometrically together with the estimates above.
  
We claim:

\begin{lemma}\label{lem:bclose0}
For $b$ sufficiently close to 0, we  have
\[
\frac{B_1 + C_1}{A_1}<\frac{B_2 + C_2}{A_2} \ .
\]
\end{lemma}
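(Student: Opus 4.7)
The plan is to take $b\to 0^+$ on both sides and show that the limits are strictly ordered; continuity of both ratios in $b$ then extends the strict inequality to a neighborhood of $b=0$. For the right-hand side, each of $A_2$, $B_2$, $C_2$ has a finite positive limit as $b\to 0^+$, obtained (by dominated convergence) by collapsing $z^{-1/4}(z-b)^{-1/4}$ to $z^{-1/2}$ in $C_2$ and $(-z)^{-1/4}(b-z)^{-1/4}$ to $(-z)^{-1/2}$ in $A_2$ and $B_2$; denote these limits $A_2^{(0)}$, $B_2^{(0)}$, $C_2^{(0)}$. Meanwhile $E_2=\int_0^b|\varphi_2|$ is of size $O(b^{1/2})$ and vanishes in the limit. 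Passing $b\to 0^+$ through the closure identity $E_2=\tfrac12(B_2-C_2-A_2)$ established just above therefore forces the degenerate relation $B_2^{(0)}=A_2^{(0)}+C_2^{(0)}$, and so
\[
\lim_{b\to 0^+}\frac{B_2+C_2}{A_2}=\frac{A_2^{(0)}+2C_2^{(0)}}{A_2^{(0)}}=1+\frac{2C_2^{(0)}}{A_2^{(0)}}>1.
\]

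For the left-hand side, the crude asymptotics proved just above give only $A_1\approx C_1\approx b^{-1/2}$, which is not sharp enough; I need to see that the two leading constants agree. The substitutions $z=-bu$ in $A_1$ and $z=bt$ in $C_1$, together with dominated convergence on the bulk of the integration range (a contribution of size $O(b^{1/2})$ from the region near $z=1$ in $C_1$ is negligible), yield
\[
A_1\sim b^{-1/2}(-a)^{1/2}K,\qquad C_1\sim b^{-1/2}(-a)^{1/2}K,
\]
with the common constant
\[
K=\int_0^\infty u^{-3/4}(1+u)^{-3/4}\,du=\int_1^\infty t^{-3/4}(t-1)^{-3/4}\,dt,
\]
the two forms being equivalent via $t=1+u$. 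Since $B_1$ has a finite positive limit, $B_1/A_1\to 0$ and $C_1/A_1\to 1$, so $\lim_{b\to 0^+}(B_1+C_1)/A_1=1$.

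Combining these two limits with the continuity in $b$ of both ratios yields the desired strict inequality for $b$ sufficiently close to $0$. I expect the main obstacle to be the right-hand-side limit: the purely asymptotic reasoning used elsewhere in the section only shows $(B_2+C_2)/A_2$ is bounded and bounded away from zero, and it is the geometric input---the pentagon closure identity applied in the degenerate limit where the edge $E_2$ collapses---that pins the limiting value of this ratio to something strictly greater than $1$, without having to estimate the three specific definite integrals $A_2^{(0)}$, $B_2^{(0)}$, $C_2^{(0)}$ directly.
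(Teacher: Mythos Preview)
Your argument is correct. The overall strategy matches the paper's---show the left ratio tends to $1$ while the right ratio stays strictly above $1$---but the division of labor between analysis and geometry is swapped. For the left-hand side, the paper uses the pentagon closure relation $C_1 = A_1 + B_1 + 2D_1$ together with the easy estimate $D_1\approx 1$ to conclude $C_1/A_1\to 1$ without ever comparing the leading constants of $A_1$ and $C_1$ directly; your substitution argument instead extracts the common constant $K=\int_0^\infty u^{-3/4}(1+u)^{-3/4}\,du$ explicitly, which is more computational but avoids invoking the $\varphi_1$-pentagon geometry. For the right-hand side the two arguments rest on the same closure relation $B_2 = A_2 + C_2 + 2E_2$: the paper reads off from it the uniform inequality $B_2>A_2$ (hence $(B_2+C_2)/A_2 > 1 + C_2/A_2\ge 1+L$), while you take the limit $E_2\to 0$ to pin down the exact limiting value $1+2C_2^{(0)}/A_2^{(0)}$. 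One minor correction: the contribution to $C_1$ from the region near $z=1$ is $O(1)$, not $O(b^{1/2})$; since this is still $o(b^{-1/2})$ your conclusion is unaffected.
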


For the left hand side note that   $\frac{B_1}{A_1} \to 0$. We proceed to show that $\frac{C_1}{A_1} \to 1$. 

 From  Figure \ref{fig:CS-4-lengths}, we see that

\begin{align*}
E_1 &= A_1 + D_1\\
C_1 &= E_1 + B_1 + D_1
\end{align*}

Thus,

$$\frac{C_1}{A_1} = \frac{A_1 + B_1 + 2D_1}{A_1} = 1 + \frac{B_1}{A_1} + 2\frac{D_1}{A_1}$$

and it remains to show $\frac{D_1}{A_1} \to 0$.

Splitting the integration interval $[1,\infty)$ at 2 allows us to estimate
\begin{flalign*}
D_1 &= \frac1{\sqrt{2}}\int_1^\infty (z - a)^{1/2} (z)^{-3/4}(z - b)^{-3/4}(z - 1)^{-3/4} dz&& \\
&\approx \int_1^2 (z - 1)^{-3/4}dz + \int_2^\infty z^{-7/4}\\
&\approx 1 \ .
\end{flalign*}

Thus $\frac{D_1}{A_1} \approx \frac1{b^{-1/2}} \to 0$, which proves $\frac{B_1+C_1}{A_1} \to 1$.

For the $\varphi_2$ integrals, we note that $A_2, B_2$, and $C_2$ are all bounded away from 0, so $\frac{C_2}{A_2} \ge L>0$ for some number $L$ and $b$ sufficently close to 0.  From Figure \ref{fig:CS-4-lengths}, we note that $B_2$ must always be larger than $A_2$.  Hence $\frac{B_2 + C_2}{A_2} \ge 1 + L >1$.\par

%
%

This proves our claim and  completes the proof of the theorem.

\end{proof}

\section{Limits}\label{sec:limits}

In this section we will prove Theorem \ref{thm:limitcs}. More precisely, we will show that there is a sequence $(a_i,b_i) \to (-\infty, 0)$ solving he period problem such that the corresponding CS surfaces, suitably scaled, converge the Costa surface.


In order to so, we will need the following lemma:

\begin{lemma}\label{lem:cslim}
For any $b\in (0, 1)$ and $N>0$, there is an $a<-N$ such that $\frac{B_1 + C_1}{A_1} > \frac{B_2 + C_2}{A_2}$.
\end{lemma}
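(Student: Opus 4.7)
The plan is to let $a\to-\infty$ with $b\in(0,1)$ fixed, show that both ratios $\frac{B_1+C_1}{A_1}$ and $\frac{B_2+C_2}{A_2}$ converge to finite positive limits, and verify that the first strictly exceeds the second; the claim then holds for all sufficiently large $|a|$.

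The first step is to determine the leading $|a|$-scaling of each of the six integrals. Substituting $u=-z$ in $A_i$ and $w=-z$ in $B_i$ moves all integrations onto half-lines, after which the rescalings $u=|a|s$ and $w=|a|t$, combined with dominated convergence (each rescaled integrand admits an $a$-independent integrable majorant), give
\[
A_1 \sim |a|^{1/2}I_1(b), \qquad C_1 \sim |a|^{1/2}J_1(b), \qquad A_2 \sim |a|^{-1/4}B(\tfrac14,\tfrac12), \qquad B_2 \sim |a|^{-1/4}K_2,
\]
with $B_1=O(|a|^{-3/4})$ and $C_2=O(|a|^{-1/2})$ of lower order, where
\[
I_1(b) := \int_0^\infty u^{-3/4}(b+u)^{-3/4}(1+u)^{-3/4}\,du, \quad J_1(b) := \int_b^1 z^{-3/4}(z-b)^{-3/4}(1-z)^{-3/4}\,dz,
\]
and $K_2 := \int_1^\infty (t-1)^{-1/2}t^{-3/4}\,dt$. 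The slightly delicate case is $A_2$, where the factor $(b+|a|s)^{-1/4}(1+|a|s)^{-1/4}$ is rewritten as $|a|^{-1/2}s^{-1/2}$ times a pointwise-convergent expression bounded by $1$; dominated convergence then reduces the rescaled integral to $\int_0^1 (1-s)^{-1/2}s^{-3/4}\,ds = B(\tfrac14,\tfrac12)$, and the same scheme handles $B_2$.

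A quick substitution $t=1/x$ converts $K_2$ into $\int_0^1 x^{-3/4}(1-x)^{-1/2}\,dx = B(\tfrac14,\tfrac12)$, so $K_2$ and the scaling constant of $A_2$ coincide exactly. Hence, as $a\to-\infty$,
\[
\frac{B_1+C_1}{A_1}\;\longrightarrow\;\frac{J_1(b)}{I_1(b)}, \qquad \frac{B_2+C_2}{A_2}\;\longrightarrow\;1.
\]

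The main obstacle is to show $J_1(b) > I_1(b)$ for every $b\in(0,1)$, since the two integrals live on different intervals with different integrands and no elementary bound is apparent. The strategy is to bring both into a common form on $[0,1]$ via explicit substitutions: the linear change $z=b+(1-b)s$ in $J_1$ gives
\[
J_1(b) = (1-b)^{-1/2}\int_0^1 \bigl(b+(1-b)s\bigr)^{-3/4}s^{-3/4}(1-s)^{-3/4}\,ds,
\]
while the M\"obius change $u=v/(1-v)$ in $I_1$, after collecting the $(1-v)$-factors from each piece, yields
\[
I_1(b) = \int_0^1 v^{-3/4}\bigl(b+(1-b)v\bigr)^{-3/4}(1-v)^{1/4}\,dv.
\]
Both integrands now share the common weight $(b+(1-b)s)^{-3/4}s^{-3/4}$ and differ only in the $(1-s)$-factor ($(1-s)^{-3/4}$ in $J_1$ versus $(1-s)^{1/4}$ in $I_1$) and the prefactor $(1-b)^{-1/2}$ in $J_1$. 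Since $(1-s)^{-3/4}>(1-s)^{1/4}$ on $(0,1)$ and $(1-b)^{-1/2}>1$ for $b\in(0,1)$, one concludes $J_1(b)>I_1(b)$, so the limit of the first ratio strictly exceeds the limit $1$ of the second, and the desired inequality holds for all $|a|$ large enough.
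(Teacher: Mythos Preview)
Your argument is correct. The scaling estimates, the dominated-convergence limits, the Beta-function identity $K_2=B(\tfrac14,\tfrac12)$, and the two substitutions that bring $J_1(b)$ and $I_1(b)$ onto $[0,1]$ with the common weight $(b+(1-b)s)^{-3/4}s^{-3/4}$ all check out, so $J_1(b)>I_1(b)$ and hence $\frac{B_1+C_1}{A_1}\to J_1(b)/I_1(b)>1$ while $\frac{B_2+C_2}{A_2}\to 1$.

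The route, however, is genuinely different from the paper's. The paper does not compute the limiting constants $J_1(b)/I_1(b)$ at all; instead it exploits the closed-polygon relations coming from the Schwarz--Christoffel flat structures, namely $C_1=A_1+B_1+2D_1$ and $B_2=A_2+C_2+2E_2$, and then only needs the cruder facts that $D_1/A_1$ stays bounded away from $0$ while $E_2/A_2\to 0$ as $a\to-\infty$. Your approach is purely analytic and self-contained: it bypasses the polygon geometry entirely, produces the exact limits of both ratios, and settles the inequality $J_1(b)>I_1(b)$ by an elegant pair of substitutions (the M\"obius change $u=v/(1-v)$ being the key move). The paper's argument is shorter once the flat-structure relations are in hand; yours is sharper and would still work even if those geometric identities were unavailable.
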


Using this lemma, we prove the existence of the Costa limit as follows:

\begin{proof}(of Theorem \ref{thm:limitcs}) Fix  $N>0$ large and $\epsilon >0$ as usual.
We begin by picking $a_0=-N$. By Theorem \ref{thm:cs4exist}, there exists $b_0$ such that the period problem is solved for $(a_0, b_0)$. Now,  Lemma \ref{lem:bclose0} implies that we can find  $0<b_1<\epsilon$ so  that for the pair $(a_0, b_1)$ 
\[
\frac{B_1 + C_1}{A_1}<\frac{B_2 + C_2}{A_2} \ .
\]

Applying Lemma \ref{lem:cslim} above to $b_1$, we can find  $a_0'<a_0=-N$ so that the reverse inequality holds for $(a_0', b_1)$.
By the intermediate value theorem there is $a_1\in (a_0',a_0)$ such that $(a_1, b_1)$ solves the period problem. Thus there are solutions $(a,b)$  to the period problem with $a$ arbitrarily negative and $b$ arbitrarily small.



Recall the forms 
\begin{align*} 
  G\, dh  = {}&  \rho\varphi_1 =  \rho(a-z)^{-1/2}(z)^{-3/4}(b-z)^{-3/4}(1-z)^{-3/4} \, dz \\
  \frac1G\, dh = {}&\frac1\rho\varphi_1 = \frac1\rho  (a-z)^{-1/2}(z)^{-1/4}(b-z)^{-1/4}(1-z)^{1/4} \, dz
\end{align*}
where $\rho$ is determined by
\[
\rho^2 = \frac {\int_0^a\varphi_2}{\int_0^a \varphi_1}
\]
such that $A_1=A_2$.

Taking the limit as $a \to - \infty$ and $b \to 0$, we obtain the limit forms $\psi_1 = \lim \varphi_1= \sqrt{-a}z^{-3/2}(z-1)^{-3/4}\, dz$ and $\psi_2= \lim \sqrt{-a} \varphi_2 =z^{-1/2} (z-1)^{-1/4}\, dz$. Defining $\rho$ by taking limits as well 
\[
\rho^2 = \frac {\int_0^{-\infty}\psi_2}{\int_0^{-\infty} \psi_1}
\]
we obtain Weierstrass data for a limit surface.

We claim that these are the Weierstrass data for the Costa surface. This follows from Example 3.5.1 in \cite {ww2}, where it is proven that the flat structures of the 1-forms $G\, dh$ and $\frac1G\, dh$ of the Costa surface are the infinite polygons shown in Figure \ref{fig:Costa-lengths}. Moreover,  the periods for the Costa surface are closed if and only if $C_2=D_1$.

\begin{figure}[H]
   \centering
   \includegraphics[width=6in]{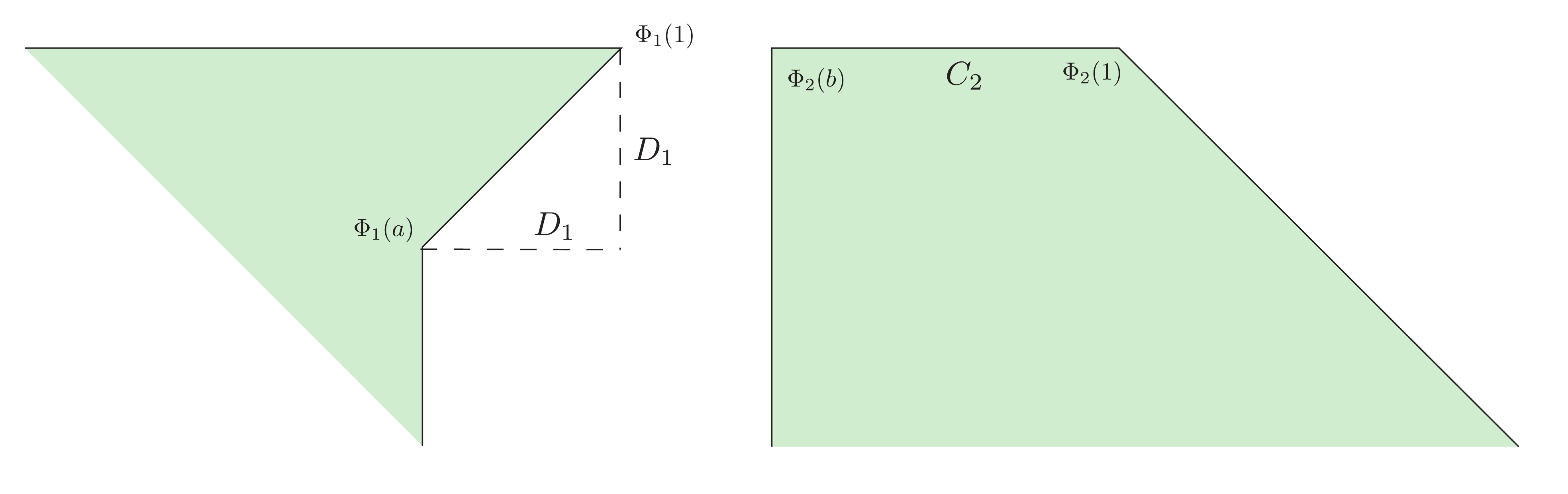}
   \caption{Edge lengths for the flat structures of the Costa surface}
   \label{fig:Costa-lengths}
\end{figure}

To see that these are the limit flat structures of our limit 1-forms, note that first $A_2\to \infty$ and $B_2, E_2\to 0$ by convergence of the Schwarz-Christoffel maps. By the period conditions, this implies that also $A_1\to \infty$ and that $D_1$ and $C_2$ have the same finite limit as required.
This implies that the constructed limit is the Costa surface.

\end{proof}

\begin{proof}(of Lemma \ref{lem:cslim})

Suppose that $b$ is fixed, and send $a \to - \infty$. As usual, we estimate:

\begin{flalign*}
A_1 &= \int_a^0 (z - a)^{1/2} (-z)^{-3/4}(b-z)^{-3/4}(1-z)^{-3/4} dz &&\\
&\approx \int_a^{-1} (z - a)^{1/2} (-z)^{-3/4}(b-z)^{-3/4}(1-z)^{-3/4} dz + \int_{-1}^0 (z - a)^{1/2} (-z)^{-3/4}\,  dz\\
\end{flalign*}
The first integral is bounded above by 
\[
 (-a)^{1/2} \int_a^{-1}  (-z)^{-3/4}(b-z)^{-3/4}(1-z)^{-3/4} dz  \approx (-a)^{1/2}
 \]
 while the second is $\approx (-a)^{1/2}$. Hence the sum is also $\approx (-a)^{1/2}$.

Using the substitution $z = at$, the usual cancellations, we obtain $A_2 \approx (-a)^{-1/4} $.
\medskip

For estimates of $B_1$ and $B_2$, note that when $z < a < -1$, we have $-2z > 1 - z > b - z > -z$ and thus $(b - z)\approx (1 - z) \approx -z$, giving,

\begin{flalign*}
B_1 &= \int_{-\infty}^a (a - z)^{1/2} (-z)^{-3/4}(b-z)^{-3/4}(1-z)^{-3/4} dz&& \\
&\approx \int_{-\infty}^a (a - z)^{1/2} (-z)^{-9/4}dz \\
&\approx (-a)^{-3/4}\int_0^1\left(\frac1t - 1\right)^{1/2}\left(\frac1t\right)^{-1/4}dt \text{   using } z = \frac at\\
&\approx (-a)^{-3/4} \\
\end{flalign*}

\begin{flalign*}
B_2 
&\approx \int_{-\infty}^a (a - z)^{-1/2} (-z)^{-3/4}dz &&\\
&\approx (-a)^{-1/4}\int_0^1\left(\frac{1-t}t \right)^{-1/2}\left(\frac1t\right)^{5/4}dt\\
&\approx (-a)^{-1/4}\int_0^1(1-t)^{-1/2}t^{-3/4}dt\\
&\approx (-a)^{-1/4}
\end{flalign*}

For the estimates of $C_1$ and $C_2$: 

\begin{flalign*}
C_1 &= \int_b^1 (z - a)^{1/2} z^{-3/4}(z - b)^{-3/4}(1-z)^{-3/4} dz&& \\
&\approx -(a)^{1/2}\int_b^1 z^{-3/4}(z - b)^{-3/4}(1-z)^{-3/4} dz \\
&\approx (-a)^{1/2}
\end{flalign*}

\begin{flalign*}
C_2 &= \int_b^1 (z-a)^{-1/2} z^{-1/4} (z - b)^{-1/4}(1-z)^{-1/4} dz &&\\
&\approx (-a)^{-1/2}\int_b^1  z^{-1/4} (z - b)^{-1/4}(1-z)^{-1/4} dz \\
&\approx (-a)^{-1/2}
\end{flalign*}

Our ratios of interest become

\begin{align*}
\frac{B_1 + C_1}{A_1} \approx \frac{(-a)^{-3/4} + (-a)^{1/2}}{(-a)^{1/2}}\approx 1\\
\frac{B_2 + C_2}{A_2} \approx \frac{(-a)^{-1/4} + (-a)^{-1/2}}{(-a)^{-1/4}}\approx 1
\end{align*}

Thus the ratios $\frac{B_1 + C_1}{A_1}$ and $\frac{B_2 + C_2}{A_2}$ both remain bounded away from 0 as $a\to -\infty$.  To compare them, we consider the geometry of the flat structures.

For the $\varphi_1$ structure, we have $\frac{B_1}{A_1}\approx (-a)^{-5/4} \to 0$ and $\frac{C_1}{A_1} \approx 1$.  We claim that $\frac{C_1}{A_1} \to 1 + L$ where $L > 0$.\\

%

Using the lengths of the polygons,

$$\frac{C_1}{A_1} = \frac{A_1 + B_1 + 2D_1}{A_1} = 1 + \frac{B_1}{A_1} + 2\frac{D_1}{A_1}$$

and it remains to show $\frac{D_1}{A_1} \ge L > 0$ for $a\to -\infty$.

\begin{flalign*}
D_1 &\approx\int_1^\infty (z - a)^{1/2} (z)^{-9/4}\, dz &&\\
&= (-a)^{1/2}\int_1^\infty (1-z/a)^{1/2} (z)^{-9/4}\, dz \\
&\approx (-a)^{1/2}
\end{flalign*}

Thus $\frac{D_1}{A_1} \approx 1$, which proves the claim.

For the $\varphi_2$ structure, we have $\frac{B_1}{A_2}\approx1$ and $\frac{C_2}{A_2} \approx (-a)^{-1/4} \to 0$.  We claim that $\frac{B_2}{A_2} \to 1$.  We have the following equalities by the geometry of the $\varphi_2$-polygon:

\begin{align*}
D_2 &= A_2 + E_2\\
B_2 &= E_2 + C_2 + D_2
\end{align*}

Thus,

\[
\frac{B_2}{A_2} = \frac{A_2 + C_2 + 2E_2}{A_2} = 1 + \frac{C_2}{A_2} + 2\frac{E_2}{A_2}
\]

and it remains to show that $\frac{E_2}{A_2}\to 0$.  

\begin{flalign*}
E_2 &= \frac1{\sqrt{2}}\int_0^b (z-a)^{-1/2} z^{-1/4} (b - z)^{-1/4}(1-z)^{-1/4} dz &&\\
&\approx (-a)^{-1/2}\int_0^bz^{-1/4} (b - z)^{-1/4}(1-z)^{-1/4} dz\\
&\approx (-a)^{-1/2}
\end{flalign*}

and $\frac{E_2}{A_2}\to 0$.

\end{proof}

\section{Embeddedness of the Families}\label{sec:embed}

In this section, we will prove the embeddedness of the families SS-4 and CS-4. Both cases are similar, we will therefore focus on CS-4. 
The main reason that the argument below works is that the degree of the Gauss map of our surfaces is 3 so that we can easily eliminate the possibility of critical points of the coordinate functions. We will prove:
\begin{enumerate}
\item The boundary of the CS-4 pentagon is a graph over a simple curve in the $yz$-plane, thus bounding a simply connected domain $\Omega$;
\item The  CS-4 pentagon is contained in an $x$-cylinder over $\Omega$;
\item The projection of the interior of the CS-4 pentagon has the unique path and homotopy lifting property.
\end{enumerate}

Then it follows that the CS-4 pentagon is a graph over $\Omega$: Otherwise, take a curve on the CS-4 pentagon that connects 
two points $x$-above a point $p\in \Omega$ and project it onto $\Omega$. Its image is closed in $\Omega$ and can be retracted onto $p$.
By the homotopy lifting property, the endpoints of the lifted curves need to be the same, contradicting that here are two points above $p$.

\smallskip

For (1), we note that the arcs $f(1)f(\infty)$ and $f(0)f(b)$ are diagonals of the box and hence trivially graphs over  segments in the plane $x=0$. The two arcs $f(b)f(1)$ and $f(\infty)f(a)$ lie in faces of the box parallel to the plane y=0. If either  of them is not graphical over a segment (parallel to the $z$-axis) in the plane $x=0$, then the Gauss map needs to become vertical at an interior point of that segment. This contradicts that $\deg G=3$, because all points with vertical normal are accounted for as vertices of the CS-4 pentagon and its reflective copies. It remains to discuss the arc $f(a)f(0)$ lying in a plane parallel to the plane $x=0$. 

Firstly, this curve is graphical over the line segment $f(a)f(0)$ (parallel to the $z$-axis), again by the same degree argument. We need to show that it stays away from the edges of the box face it lies in. The two edges parallel to the $y$-axis cannot be met because otherwise this produces points with vertical normal. 

For the edge through $f(a)f(0)$, first note  that the arc $f(a)f(0)$ stays near $f(a)$ and $f(0)$ inside the box face. If it leaves the box face across the segment $f(a)f(0)$, this would produce three points with normal in the $y$-direction, again contradicting (after replication) that $\deg G =3$. 

Finally we show that arc $f(a)f(0)$ stays on one side of the vertical box edge through $f(1)$. To see this, choose a point $f(x)$ on $f(a)f(0)$ and a point $f(y)$ on $f(b)f(\infty)$. Because the latter lies in the same plane parallel to $xz$-plane as $f(1)$, it suffices to show that the $y$-coordinate of $f(y)-f(x)$ never changes sign. This $y$-coordinate is equal to $\re \left(i (\Phi_1(y)-\Phi_1(x))+ i (\Phi_2(y)-\Phi_2(x)\right)$. From Figure \ref{fig:CS-4-schwarz} we see that $\Phi_1(y)-\Phi_1(x)$ and $\Phi_2(y)-\Phi_2(x)$ have positive imaginary parts regardless of the choice of $x$ or $y$, which implies the claim.

\smallskip
For (2), let's assume that the projection $\Pi$ of the  CS-4 pentagon onto the plane $x=0$ does not lie in $\Omega$. Take a boundary point of $\Pi$ that does not lie in $\overline \Omega$. By the implicit function theorem, its preimage in the CS-4 pentagon has a  normal in the $yz$-plane.  To see why this is impossible, we use the explicit form of the Gauss map
\[
G(z)^2 = \frac{z-a}{\sqrt{1-z}\sqrt{z} \sqrt{z-b}}
\]
and show that its square it is never negative real. Recall that $z\mapsto\sqrt{1-z}\sqrt{z}$ maps the upper half plane to the right half plane. The same is true for \[
z\mapsto  \frac{z-a}{ \sqrt{z-b}} = \sqrt{z-b} +\frac{b-a}{\sqrt{z-b}}
\]
because $b-a>0$. Thus $G(z)^2$ is the quotient of two complex numbers in the right half plane and so never negative real.

\smallskip

For (3), we again use that the  interior of the  CS-4 pentagon has no points with horizontal normal. The claim follows from the implicit function, applied in the compact region where the curve of homotopy resides.

For the SS-4 surface, the argument is similar, using the plane $y=0$. 

\section{A Characterization of $SS$-4 and $CS$-4 by Symmetries}\label{sec:sym}

We conclude this paper with a uniqueness statement:
 
 \begin{theorem}
 Let $X$ be a genus 4 triply periodic minimal surface such that 
\begin{enumerate}
\item the planes $x=0$ and $y=0$ are symmetry planes;
\item the surface contains the line $y=x$ in the plane $z=0$;
\item a translational fundamental domain of $X$ is bounded by the planes $x=\pm1$,  $y=\pm 1$ and $z=\pm h$; 
\item this domain is cut into eight congruent copies by the planes $x=0$, $y=0$ and the line $y=x$ in the plane $z=0$.
\end{enumerate} 
Then $X$ is either of type CS or SS.
\end{theorem}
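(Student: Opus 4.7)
The plan is to show that the fundamental piece of $X$ under the order-$8$ symmetry group is a minimal pentagon of the form constructed in Sections~\ref{sec:ss4} and~\ref{sec:cs4}, with the dichotomy CS versus SS arising from the two possible configurations of its horizontal straight edges.

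The three given symmetries $\rho_x$, $\rho_y$, and $R$ generate a group $G\cong D_4$ of order~$8$ acting on $X$.  Combined with the translational lattice, $\rho_x$ and $\rho_y$ yield reflections across the planes $\{x=\pm 1\}$ and $\{y=\pm 1\}$, so the $G$-fundamental domain inside the box of assumption~(3) is one of the eight congruent pieces of assumption~(4); I take it to be $P := X\cap([0,1]^2\times[0,h])$.  All three generators are anti-holomorphic on $X$ (each reverses the surface normal), so the holomorphic subgroup $H \leq G$ has index~$2$ and is cyclic of order~$4$, generated by the $90^\circ$ rotoreflection $\rho_x R$, whose square $(\rho_x R)^2$ is the $180^\circ$ rotation about the $z$-axis.

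The next step is to show that $P$ is a topological disk with boundary consisting of exactly five arcs of the types treated in Proposition~\ref{prop:symmetries}.  I would apply Riemann--Hurwitz to the cover $X\to X/H$: the fixed points of non-trivial elements of $H$ are the points of $X$ lying on the four vertical lines through $(x_0,y_0)$ with $x_0,y_0\in\{0,1\}$ (where $(\rho_x R)^2$ fixes in the torus) and, among those, the points with $z\in\{0,h\}$ that are additionally fixed by $\rho_x R$ itself.  Using the bound $\deg G = 3$ (from genus~$4$ via Gauss--Bonnet, cf.~Section~\ref{sec:embed}) to cap the number of vertical-normal points of $X$ at six, I would determine the conformal type of $X/H$ and conclude that $P$ lifts injectively from a Jordan domain cut out on $X/H$ by the fixed-set circle of the anti-holomorphic involution descended from $G/H$.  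Proposition~\ref{prop:symmetries} then identifies the five boundary arcs of $P$ as three planar symmetry arcs (in the planes $\{x=0\}$, $\{y=0\}$, and one of $\{x=\pm 1\}$ or $\{y=\pm 1\}$) together with two horizontal straight segments lying in the faces $\{z=0\}$ and $\{z=h\}$.  Pulled back to the upper half-plane model of $P$, the Weierstrass $1$-forms $dh$, $G\,dh$, $(1/G)\,dh$ become Schwarz--Christoffel integrands with precisely the exponent structure of Sections~\ref{sec:ss4} and~\ref{sec:cs4}.

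Since the two horizontal straight edges must be diagonals of the unit-square top and bottom faces of the box, and the bottom edge lies on $y=x$ by assumption~(2), the top edge is either parallel (on $y=x$, yielding type SS) or orthogonal (on $y=-x$, yielding type CS), completing the classification.  The main obstacle is the Riemann--Hurwitz and conformal-type step: one must carefully account for the extra fixed circles of $(\rho_x R)^2$ created by translational identifications (at the four lattice-corner vertical lines, not only the $z$-axis), verify that the total count of vertical-normal points on $X$ is exactly~$6$, and rule out extra critical points of the coordinate functions on $\partial P$---which could in principle give a boundary with more than five arcs---by a degree argument analogous to the one in Section~\ref{sec:embed}.
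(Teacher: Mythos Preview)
Your approach differs from the paper's, and the difference matters.

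The paper never uses Riemann--Hurwitz. Instead it argues directly on the polygonal tiling of $X$: the eight congruent pieces must be simply connected (otherwise the genus would be at least~$8$), so they tile the genus-$4$ surface by topological disks. The four corners where a horizontal straight segment meets a vertical symmetry plane are $45^\circ$ vertices; any remaining vertices are $90^\circ$. With $8$ faces, $4(4+n)$ edges, and $4+2n$ vertices on $X$, Euler's formula $V-E+F=2-2g=-6$ forces $n=1$, hence pentagons. This is elementary and self-contained.

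Your Riemann--Hurwitz route is not wrong in principle, but it has a circularity you have not broken. To run Riemann--Hurwitz for $X\to X/H$ you need the number of fixed points of each non-trivial element of $H$ on $X$. Those fixed points are exactly the intersections of $X$ with the vertical axes through the lattice points, i.e.\ the polygon vertices --- which is what you are trying to count. You invoke $\deg G=3$ to bound the vertical-normal points by~$6$, but that only controls the $90^\circ$ vertices; the $45^\circ$ vertices (endpoints of the horizontal segments) are not vertical-normal points, and their number is also an unknown until you know the polygon shape. So the Riemann--Hurwitz ledger cannot be closed without an independent input, and the natural independent input is precisely the Euler count the paper uses. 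In effect, the paper's argument \emph{is} the missing step in yours.

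A second gap: once one knows $P$ is a pentagon, there is still a combinatorial case analysis to do. The paper argues that exactly one edge must join an endpoint of the top segment to an endpoint of the bottom segment while staying in a single box face, locates the remaining fifth vertex on the appropriate vertical box edge, and shows that the residual choices are related by box symmetries. Your final paragraph collapses this to ``the top edge is either parallel or orthogonal to the bottom edge,'' which is true, but does not yet pin down the three planar arcs and the position of the fifth vertex --- and that is what ``type SS'' and ``type CS'' actually require.
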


\begin{proof}
%
%
%

The eight congruent copies are simply connected minimal polygons, because otherwise the genus of $X$  would be at least 8. We call the box $[0,1]\times[0,1]\times[0,h]$ just ``the box'' in what follows.

The polygon vertices can only occur at the intersection of symmetry lines.
 Four of them must occur at the four polygon vertices where the horizontal lines intersect the vertical symmetry planes. These are $45^\circ$ vertices.
 All other vertices are $90^\circ$ vertices. Let's assume there are $n$ of such, so our polygon is a $(4+n)$-gon. Then there are  $4+2n$ vertices on the 
genus 4 surface and $4(4+n)$ edges. By Euler's theorem, we have necessarily $n=1$. Hence our polygons are pentagons.

We next locate points with vertical normal. They must occur at the pentagon vertices by symmetry, and on $X$ we have 6 of them. As the degree of the Gauss map of a genus 4 triply periodic minimal surface is 3, we have found all of them.

 This limits the  possibilities for the shape of the pentagon. There must be a single edge of the pentagon that connects  one end point of the top horizontal segment to an end point of the bottom horizontal segment. In addition, this segment must stay in a single face of the box.

To locate the remaining two arcs, we distinguish the two cases. If the two horizontal segments are parallel, we can arrange them as  in case SS of Figure \ref{fig:fundamental}. We can also assume that the vertices there labeled $f(b)$ and $f(\infty)$ are connected by the single edge that stays without loss in the front side of the box as in that figure. The remaining two edges must connect the two remaining vertices (labeled $f(0)$ and $f(1)$) of the horizontal segments. The vertex they have in common ($f(a)$) must occur on the vertical box segment connecting $f(0)$ and $f(1)$. This leaves two choices for the two polygon edges, that are, however, symmetric by a reflection of the box at a horizontal plane. Thus we are precisely in the situation of type SS.

Similarly, if the two horizontal segments are orthogonal, we can arrange them as  in case CS of Figure \ref{fig:fundamental}. We can also assume that the vertices labeled $f(b)$ and $f(1)$ are connected by the single edge that stays without loss in the left  front side of the box as in that figure. The remaining vertex must occur on either of the vertical box segments through $f(0)$ or $f(\infty)$, as any other choice would require more vertices. Without loss, we can assume that $f(a)$ occurs above $f(0)$. This forces the segment $f(a)f(\infty)$ to lie in the right back face of the box, and consequently $f(0)f(a)$ in the left back face.

\end{proof}

We end with a question: Can one classify  all genus 4 triply periodic minimal surface such that the planes $x=0$ and $y=0$ are symmetry planes and the surface contains the line $y=x$ in the plane $z=0$?

\bibliography{minlit}

\begin{thebibliography}{CHMI89}

\bibitem[Bas03]{Bat1}
Val{\'e}rio~Ramos Bastista.
\newblock A family of triply periodic costa surfaces.
\newblock {\em Pacific Journal of Mathematics}, 212(2):171--191, 2003.

\bibitem[CHMI89]{chm2}
M.~Callahan, D.~Hoffman, and W.~H. Meeks~III.
\newblock Embedded minimal surfaces with an infinite number of ends.
\newblock {\em Inventiones Math.}, 96:459--505, 1989.

\bibitem[HKW93]{howe3}
D.~Hoffman, H.~Karcher, and F.~Wei.
\newblock The genus one helicoid and the minimal surfaces that led to its
  discovery.
\newblock In {\em Global Analysis and Modern Mathematics}. Publish or Perish
  Press, 1993.
\newblock K. Uhlenbeck, editor, p. 119--170.

\bibitem[Kar89]{ka5}
H.~Karcher.
\newblock The triply periodic minimal surfaces of {A}lan {S}choen and their
  constant mean curvature companions.
\newblock {\em Manuscripta Math.}, 64:291--357, 1989.

\bibitem[MI90]{me6}
W.~H. Meeks~III.
\newblock The theory of triply-periodic minimal surfaces.
\newblock {\em Indiana University Math. Journal}, 39(3):877--936, 1990.

\bibitem[Sch70]{sch1}
A.~Schoen.
\newblock Infinite periodic minimal surfaces without self-intersections.
\newblock Technical Note D-5541, NASA, Cambridge, Mass., 1970.

\bibitem[Tra96]{tr1}
M.~Traizet.
\newblock Construction de surfaces minmales en recollant des surfaces de
  scherk.
\newblock {\em Annals de l'Institut Fourier}, 46:1385--1442, 1996.

\bibitem[Tra98]{tr3a}
M.~Traizet.
\newblock Construction of triply periodic minimal surfaces.
\newblock preprint 172 Tours, 1998.

\bibitem[Tra08]{tr7}
M.~Traizet.
\newblock On the genus of triply periodic minimal surfaces.
\newblock {\em J. Differential Geometry}, 79(2):243--275, 2008.

\bibitem[WW02]{ww2}
M.~Weber and M.~Wolf.
\newblock Teichm\"uller theory and handle addition for minimal surfaces.
\newblock {\em Annals of Math.}, 156:713--795, 2002.

\end{thebibliography}
\bibliographystyle{alpha}

\end{document}